\documentclass[11pt]{amsart}

\usepackage[margin=1.0in]{geometry}
\usepackage[dvipsnames]{xcolor}
\usepackage{amsmath,amssymb,amsthm,mathtools}
\usepackage{enumitem}
\usepackage[backref=page]{hyperref}
\usepackage{microtype}

\hypersetup{
    colorlinks=true,
    linkcolor=blue,
    citecolor=Purple,
    urlcolor=blue
}

\renewcommand*{\backref}[1]{}
\renewcommand*{\backrefalt}[4]{%
    \ifcase #1 %
    \or
        \,[#2]%
    \else
        \,[#2]%
    \fi
}

\newtheorem{theorem}{Theorem}[section]
\newtheorem{proposition}[theorem]{Proposition}
\newtheorem{lemma}[theorem]{Lemma}
\newtheorem{corollary}[theorem]{Corollary}
\newtheorem{definition}[theorem]{Definition}
\newtheorem{remark}[theorem]{Remark}
\newtheorem{example}[theorem]{Example}

\DeclareMathOperator{\rank}{rank}

\title{On asymptotic approximate groups in nilpotent groups}
\author{Arindam Biswas}
\email{arin.math@gmail.com}
\date{\today}
\subjclass[2020]{Primary 20F69; Secondary 20F18, 20F65, 11B13, 11B75, 11P70}
\keywords{Asymptotic approximate groups, virtually nilpotent groups, polynomial growth, word metrics, Bass--Guivarc'h theorem, covering numbers}

\begin{document}

\begin{abstract}
Let \(G\) be a group and let \(A\subseteq G\) be non-empty. We call \(A\) an
asymptotic $(r,l)$-approximate group if, for a fixed dilation factor \(r\), the
larger product sets \(A^{hr}\) can, for all sufficiently large \(h\), be covered
by a bounded number of left translates of \(A^h\), with the bound $l$ independent of
\(h\). We show that, in virtually nilpotent groups, finite sets whose powers contain
a symmetric word ball of radius comparable to \(h\) are asymptotic approximate
groups. We also prove a nonabelian semilinear-set analogue for certain infinite
sets in these groups.
\end{abstract}

\maketitle

\section{Introduction}

Let $G$ be a group.  For subsets $E,F\subseteq G$, write $EF:=\{ef:e\in E,\ f\in F\}$
for their Minkowski set product, and $E^{-1}:=\{e^{-1}:e\in E\}.$
We say that $E$ is \emph{symmetric} if $E^{-1}=E$, and we denote by
\(\langle E\rangle\) the subgroup generated by $E$.  A \emph{subsemigroup} of
$G$ is a subset closed under multiplication; when we need a subsemigroup to
contain the identity, we state this explicitly.

For every subset $E\subseteq G$, we set $E^0:=\{e\}.$
Let $A\subseteq G$ be non-empty.  For an integer $h\ge 1$, write
\[
A^h:=\{a_1a_2\cdots a_h:a_i\in A\}.
\]
Thus $E^h$ always denotes the set of products of exactly $h$ elements of $E$.
If $e\in A$, then the sets $A^h$ are increasing in $h$. Throughout,
\(\mathbb N=\{1,2,3,\dots\}\), and we write
\(\mathbb N_0=\{0,1,2,\dots\}\).

We begin by recalling Nathanson's one-sided notion of approximate group
\cite{Nathanson2018}.  Compared with the finite approximate groups of
Tao and Breuillard--Green--Tao \cite{Tao2008,BGT2012}, this formulation does
not assume symmetry, finiteness, or the presence of the identity.

\begin{definition}[$(r,l)$-approximate group]
Let $r,l\in\mathbb N$.  A non-empty set $A\subseteq G$ is an \emph{$(r,l)$-approximate group} if there is a set $X\subseteq G$ such that
\[
|X|\le l
\quad\text{and}\quad
A^r\subseteq X A.
\]
\end{definition}
He also defined the corresponding asymptotic version.
\begin{definition}[Asymptotic $(r,l)$-approximate group]
Let $r,l\in\mathbb N$.  A non-empty set $A\subseteq G$ is an \emph{asymptotic $(r,l)$-approximate group} if there is $h_0\in\mathbb N$ such that, for every integer $h\ge h_0$, there is a set $X_h\subseteq G$ satisfying
\[
|X_h|\le l
\quad\text{and}\quad
A^{rh}\subseteq X_h A^h.
\]
We say that $A$ is an \emph{asymptotic approximate group} if, for every fixed $r\in\mathbb N$, there exists $l=l(A,r)$ such that $A$ is an asymptotic $(r,l)$-approximate group.
\end{definition}

From a geometric point of view, the inclusion
\[
A^{rh}\subseteq X_hA^h
\]
asserts that $A^{rh}$ can be covered by at most $|X_h|$ left translates of
$A^h$.  The qualifier ``asymptotic'' indicates that this uniform covering
property is required only once the scale parameter $h$ is large enough.
Nathanson proved that every finite subset of an abelian group is an
asymptotic approximate group \cite{Nathanson2018}.  Biswas--Moens gave another
proof, improved bounds in the finite abelian case, and proved corresponding
results for abelian semilinear sets \cite{BiswasMoens2022}.  
In this paper we investigate which subsets of virtually nilpotent groups are
asymptotic approximate groups.  Our main tools are a general inner-ball
criterion and its consequences.  Using them, we obtain an asymmetric
semigroup-generating case, recall the converse established by
Biswas--Moens \cite[Section 5]{BiswasMoens2022}, and prove a restricted
nonabelian semilinear analogue for sets of the form \(FM\), where \(M\) is a
subsemigroup containing \(e\), \(F\subseteq N_G(M)\) is finite, and \(F\) is either
symmetric or semigroup-generates its ambient subgroup.

\subsection{Results obtained}
\begin{theorem}[Symmetric finite sets]
\label{thm:symmetric-main}
Let \(G\) be a virtually nilpotent group and let \(A\subseteq G\) be finite,
non-empty, and symmetric.  Then, for every \(r\in\mathbb N\), there exist
\(l=l(A,r)\in\mathbb N\) and \(h_0\in\mathbb N\) such that, for every
\(h\ge h_0\), there is a set \(X_h\subseteq \langle A\rangle\) with
\[
|X_h|\le l
\quad\text{and}\quad
A^{rh}\subseteq X_hA^h.
\]
Thus \(A\) is an asymptotic approximate group.
\end{theorem}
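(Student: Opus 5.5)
The plan is to reduce everything to a covering/doubling statement for word balls in the finitely generated group \(H:=\langle A\rangle\). We will use polynomial growth (Bass--Guivarc'h) to obtain that statement.

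Since \(A\) is symmetric and non-empty, for any \(a\in A\) we have \(e=aa^{-1}\in A^2\). Hence \(A^{2k}\supseteq A^{2k-2}\) for all \(k\), and the even powers increase. We split by parity. Let \(B:=A^2\cup\{e\}\); wait—\(e\in A^2\) already, so \(B=A^2\) is a finite symmetric generating set of the subgroup \(H_0:=\langle A^2\rangle\). This subgroup has index at most \(2\) in \(H\), and it is finitely generated and virtually nilpotent. Write \(S_B(n)=B^n\) for the word ball of radius \(n\) in \(H_0\) with respect to \(B\). Then \(A^{2k}=S_B(k)\) exactly. For odd exponents, \(A^{2k+1}=aA^{2k}\cup\cdots\subseteq A\,S_B(k)\) and \(A^{2k+1}\supseteq a\,S_B(k)\) for any fixed \(a\in A\).

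The key input is a doubling property for balls. Since \(H_0\) is virtually nilpotent and finitely generated, Bass--Guivarc'h gives constants \(c,C>0\) and \(d\in\mathbb N_0\) with \(cn^d\le |S_B(n)|\le Cn^d\) for \(n\ge1\). From this we get the standard Ruzsa-type covering. Let \(X\subseteq S_B(m)\) be maximal with the translates \(xS_B(n)\), \(x\in X\), pairwise disjoint. Then \(S_B(m)\subseteq XS_B(2n)\), by maximality and symmetry. Also \(|X|\,cn^d\le |S_B(m+n)|\le C(m+n)^d\). Taking \(m=Kn\) gives \(S_B(Kn)\subseteq X S_B(2n)\) with \(|X|\le (C/c)(K+1)^d\), uniformly in \(n\ge1\), and with \(X\subseteq H_0\subseteq\langle A\rangle\).

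Now fix \(r\) and \(h\ge h_0:=4\). Write \(rh\le 2\lceil rh/2\rceil\) and \(h\ge 2\lfloor h/2\rfloor\). Since the even powers increase, it suffices to cover \(A^{rh}\) by translates of \(A^{2\lfloor h/2\rfloor}\) or of \(aA^{2\lfloor h/2\rfloor-2}\). Concretely, we use \(A^{rh}\subseteq A^{\epsilon}S_B(\lceil rh/2\rceil)\) with \(\epsilon\in\{0,1\}\), and set \(n:=\lfloor (\lfloor h/2\rfloor-1)/2\rfloor\ge1\) for \(h\) large. For \(h\ge h_0\) with \(h_0\) chosen so that \(n\ge1\), the ratio \(\lceil rh/2\rceil/n\) is bounded by some \(K=K(r)\) (about \(4r+O(1)\)). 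Then the covering lemma gives \(S_B(\lceil rh/2\rceil)\subseteq X S_B(2n)\).

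It remains to land inside \(A^h\). We have \(S_B(2n)=A^{4n}\), and \(4n\le 2\lfloor h/2\rfloor-2\le h-2\). Choose the parity correction: if \(h\) is even, \(A^{4n}\subseteq A^{h}\), since even powers increase and \(h\) is even. If \(h\) is odd, \(A^{4n}\subseteq a^{-1}A^{4n+1}\subseteq a^{-1}A^{h}\), using \(a\in A\) and \(4n+1\le h\) odd; so we replace \(X\) by \(Xa^{-1}\), with \(a^{-1}\in A\subseteq\langle A\rangle\). Similarly, the leading factor \(A^\epsilon\) is absorbed by enlarging \(X\) to \(A^\epsilon X\), or more carefully \(A^\epsilon X\) with conjugation avoided by writing \(A^{rh}=A^{2\lceil\cdot\rceil}\)-type decompositions on the right. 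This multiplies \(|X|\) by at most \(|A|\). The final bound is \(l\le |A|(C/c)(K(r)+1)^d\), which is independent of \(h\).

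The main obstacle is bookkeeping rather than substance: the parity issue. The odd powers \(A^{2k+1}\) are not balls, so we must check that left translates by elements of \(A\) preserve the inclusions in the correct order (left cosets). Since \(A\) need not contain \(e\), we must also confirm that \(A^{h}\) genuinely contains a translate of a ball of radius comparable to \(h\). The genuinely deep input, polynomial growth with matching upper and lower bounds, is taken from Bass--Guivarc'h.
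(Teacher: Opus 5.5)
Your proof is correct, and its engine is the same as the paper's. Two-sided Bass--Guivarc'h bounds give bounded doubling of word balls, and the maximal-disjoint-translates (Ruzsa) argument then covers a ball of radius about \(rh\) by boundedly many left translates of a ball of radius proportional to \(h\), with the translating set inside \(\langle A\rangle\).

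The only real difference is how you handle \(e\notin A\).
\begin{itemize}
\item The paper adjoins the identity. The set \(A_e=A\cup\{e\}\) is symmetric with \(A_e^h=B_{A_e}(h)\) in \(\langle A\rangle\), so its inner-ball criterion applies with \(\theta=1\). The padding lemma \(A_e^h\subseteq\{e,a^{-1}\}A^h\), which follows from \(e\in A^2\), then transfers the covering back to \(A\) at the cost of a factor \(2\). This is modular: the same criterion-plus-padding mechanism is reused verbatim for the semigroup-generating asymmetric case.
\item You keep \(A\) itself and treat \(A^2\) as a symmetric generating set of \(\langle A^2\rangle\), so the even powers \(A^{2k}\) are exactly word balls. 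You then fix parity by hand with a single left translate by an element of \(A\). This is more self-contained, but it relies on \(A^2\) being symmetric, so it does not carry over directly to the asymmetric setting.
\end{itemize}

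Two small points:
\begin{itemize}
\item With \(h_0=4\) you get \(n=0\), so you need \(h\ge 6\); you implicitly acknowledge this.
\item The leading factor causes no conjugation issue, since \(A^{\epsilon}(XS_B(2n))=(A^{\epsilon}X)S_B(2n)\) by associativity. In fact you can drop the factor \(|A|\) entirely. When \(rh\) is odd, use \(A^{rh}\subseteq a^{-1}A^{rh+1}=a^{-1}S_B(\lceil rh/2\rceil)\).
\end{itemize}
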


The second result handles an asymmetric case.  Since $A$ is not assumed symmetric, $A^h$ need not be a word ball.  We impose instead the condition that $A$ generates its subgroup as a semigroup.  This is stronger than group generation but is natural for directed word metrics.

\begin{definition}[Positive semigroup generation]
Let \(A\subseteq G\) be non-empty, and let \(\Gamma:=\langle A\rangle\).  We say
that \(A\) \emph{semigroup-generates} \(\Gamma\) if
\[
\Gamma=\bigcup_{h\ge1}A^h.
\]
Equivalently, every element of \(\Gamma\), including \(e\), is represented by a
non-empty positive word in letters from \(A\).
\end{definition}

\begin{theorem}[Asymmetric semigroup-generating case]
\label{thm:asymmetric-main}
Let \(G\) be a virtually nilpotent group and let \(A\subseteq G\) be finite and
non-empty.  Set \(\Gamma:=\langle A\rangle\).  Assume that \(A\) semigroup-generates
\(\Gamma\), i.e.
\[
\Gamma=\bigcup_{h\ge1}A^h.
\]
Then, for every \(r\in\mathbb N\), there exist \(l=l(A,r)\in\mathbb N\) and
\(h_0\in\mathbb N\) such that, for every \(h\ge h_0\), there is a set
\(X_h\subseteq\Gamma\) with
\[
|X_h|\le l
\quad\text{and}\quad
A^{rh}\subseteq X_hA^h.
\]
Thus \(A\) is an asymptotic approximate group.
\end{theorem}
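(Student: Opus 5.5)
Our plan is to reduce to the symmetric word-ball geometry of \(\Gamma\). Let \(S:=A\cup A^{-1}\), let \(|\cdot|_S\) be the word length on \(\Gamma\), and let \(B(n)\) be the closed \(S\)-ball of radius \(n\). The upper inclusion \(A^{rh}\subseteq B(rh)\) is trivial. The work is a lower inclusion of the form
\[
B(\lfloor ch\rfloor)\subseteq A^{h}\qquad (h\ge h_0)
\]
for some constant \(c=c(A)>0\). Once we have it, the covering step is standard. By the Bass--Guivarc'h theorem, since \(\Gamma\) is finitely generated and virtually nilpotent, it has polynomial growth with a doubling property: \(|B(2n)|\le K|B(n)|\) for all \(n\ge1\). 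Consequently, for every fixed \(\lambda\), \(|B(\lambda n)|\le C_\lambda|B(n)|\).

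\textbf{The lower inclusion.} We first show \(e\in A^{m}\) for some \(m\). Since \(e\in\Gamma=\bigcup_h A^h\), there is some \(m\ge1\) with \(e\in A^m\). Next, for each \(s\in A^{-1}\) we write \(s\) as a positive word of some length \(k_s\). Adjusting by powers of \(e\in A^m\) does not align lengths, so we work as follows. Let \(L\) be a common multiple-type bound, and set \(p:=\max_s k_s\). Each letter of \(S\) lies in \(\bigcup_{k\le p}A^k\).

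Any \(g\in B(n)\) is a product of at most \(n\) letters of \(S\), hence \(g\in A^{j}\) for some \(j\le pn\). To reach exactly \(A^{h}\) we pad with the identity. Here \(e\in A^m\), and more usefully \(e\in A^{k}\) for all \(k\) in a set closed under addition. For instance, for \(a\in A\), the element \(a^{-1}\) lies in \(A^{k_a}\), so \(e=aa^{-1}\in A^{1+k_a}\). Let \(D\) be the additive semigroup of lengths \(k\) with \(e\in A^k\). Then \(g\in A^{j+d}\) for every \(d\in D\).

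Let \(q:=\gcd(D)\). By the Frobenius/Schur lemma, \(D\) contains all large multiples of \(q\). It remains to handle residues mod \(q\). Because \(e\in A^{q'}\) forces \(q\mid q'\), the map \(\Gamma\to\mathbb Z/q\) given by \(g\mapsto(\text{positive word length})\bmod q\) is well defined. It is onto, since \(A^1,\dots,A^q\) realize all residues. For each residue class, fix one element \(t_\rho\in A^{\rho}\). Then \(g\) with length residue \(j\) satisfies \(g\in A^{h}\) whenever \(h\equiv j\pmod q\) and \(h-j\) is large.

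For the exact statement, cover \(A^{rh}\) differently. Since \(A^h\supseteq t\,B'\) where \(B'\subseteq A^{h-\rho}\), and every \(g\in B(\lfloor h/(2p)\rfloor)\) has some residue, we get
\[
B(\lfloor h/2p\rfloor)\subseteq \bigcup_{\rho<q} u_\rho A^{h},
\]
with \(u_\rho\) fixed bounded elements. These \(u_\rho\) are products of inverses that realize the residue shift. This is where we expect the main obstacle: the bookkeeping of the residue class of lengths (the map \(\Gamma\to\mathbb Z/q\)). That bookkeeping costs at most a factor \(q\) in the count of translates.

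\textbf{Covering.} Take a maximal \(\delta\)-separated set \(Y\subseteq A^{rh}\subseteq B(rh)\) with \(\delta:=\lfloor h/2p\rfloor\). The balls \(yB(\delta/2)\) are disjoint and contained in \(B(rh+\delta)\). Hence
\[
|Y|\le |B(2rh)|/|B(\delta/2)|\le C,
\]
with \(C\) depending only on \(r\), \(p\) and the doubling constant. Also \(A^{rh}\subseteq YB(\delta)\subseteq \bigcup_{y,\rho} y u_\rho A^h\). So \(X_h:=\{yu_\rho\}\subseteq\Gamma\) has \(|X_h|\le qC=:l\), valid for all \(h\ge h_0\).
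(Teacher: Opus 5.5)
Your proposal is correct and is essentially the paper's proof. Bounded positive length of inverses (Lemma~\ref{lem:inverse-bound}) puts a word ball of radius $\asymp h$ inside boundedly many left translates of $A^h$, with the translates absorbing the length residue as in Lemma~\ref{lem:add-identity-padding}; your maximal separated net is the Ruzsa covering step of Proposition~\ref{prop:covering}, driven by Bass--Guivarc'h.

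The only difference is bookkeeping: you fold the identity-padding into the inner-ball step via $q=\gcd\{k\ge1: e\in A^k\}$ and the Frobenius lemma. The paper instead first treats $A\cup\{e\}$, which contains an honest inner ball, and then pads with a single $e\in A^p$ modulo $p$ using $p$ translates. So, contrary to your remark, padding by $e\in A^m$ does align lengths modulo $m$, and neither Frobenius nor the well-definedness of your residue map is actually needed. Your initially announced inclusion $B(\lfloor ch\rfloor)\subseteq A^h$ can fail when $e\notin A$ (e.g.\ $A=\{\pm1\}\subseteq\mathbb Z$), which is why the translates are unavoidable; your proposal does correctly replace it by the covering $B(\lfloor h/2p\rfloor)\subseteq\bigcup_{\rho}u_\rho A^h$.
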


The semigroup-generation hypothesis is essential for the proof given here,
though it is not necessary in general, as already shown by finite subsets of
abelian groups.  The arbitrary finite asymmetric case in virtually nilpotent
groups is not settled by the present method.  A natural possible approach
would be to prove a directed limit-shape theorem for the product sets
\(A^h\).  See the discussion in Section~\ref{sec:furthercomments}. Finally, going beyond finite sets, we treat a class of normalized infinite sets that plays the role of a nonabelian analogue of semilinear sets.

\begin{theorem}[Sets of the form \(FM\)]
\label{thm:FM-normalized}
Let \(G\) be virtually nilpotent.  Let \(M\subseteq G\) be a subsemigroup
containing \(e\), and let \(F\subseteq N_G(M)\) be finite and non-empty.  Set
\[
A:=FM.
\]
Assume that either
\begin{enumerate}[label=\textup{(\roman*)}]
\item \(F\) is symmetric, or
\item \(F\) semigroup-generates \(\langle F\rangle\).
\end{enumerate}
Then \(A\) is an asymptotic approximate group.
\end{theorem}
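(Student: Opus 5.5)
The plan is to reduce the statement to Theorems~\ref{thm:symmetric-main} and~\ref{thm:asymmetric-main} applied to the finite set \(F\), using that \(F\) normalizes \(M\) and that \(M\) is a monoid.

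\emph{Step 1: a product formula.} I would first show that \(A^h=F^hM\) for every \(h\ge1\). Since \(f\in N_G(M)\), we have \(Mf=fM\), so by induction
\[
(FM)^h=F\,M\,F\,M\cdots F\,M=F^hM^h .
\]
Because \(M\) is a subsemigroup containing \(e\), we get \(M^h=M\). Hence \(A^h=F^hM\) and \(A^{rh}=F^{rh}M\). This step is routine.

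\emph{Step 2: transfer of coverings.} Fix \(r\). By Theorem~\ref{thm:symmetric-main} in case (i), or Theorem~\ref{thm:asymmetric-main} in case (ii), applied to the finite non-empty set \(F\), there are \(l=l(F,r)\) and \(h_0\) such that for every \(h\ge h_0\) some \(X_h\) with \(|X_h|\le l\) satisfies \(F^{rh}\subseteq X_hF^h\). Multiplying on the right by \(M\) gives
\[
A^{rh}=F^{rh}M\subseteq X_hF^hM=X_hA^h .
\]
So \(A\) is an asymptotic \((r,l(F,r))\)-approximate group for every \(r\), which is the claim. The hypotheses of those theorems are exactly (i) or (ii), and the ambient group \(G\) is virtually nilpotent.

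\emph{Main obstacle.} The only genuinely delicate point is Step 1, specifically the commutation \(MF\subseteq FM\). Normalizing means \(fMf^{-1}=M\), which is an equality of sets, so \(mf=f(f^{-1}mf)\) with \(f^{-1}mf\in M\). This gives \(MF\subseteq FM\), which suffices to move all factors of \(F\) to the left. It is worth checking that \(N_G(M)\) is meant in the set-wise sense \(gMg^{-1}=M\) and not merely \(gMg^{-1}\subseteq M\). Under the weaker one-sided inclusion one would need \(f^{-1}Mf\subseteq M\), which is the direction actually used here. If the paper's convention were the other inclusion, I would instead move \(M\) to the left and write \(A^h=MF^h\). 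The covering \(F^{rh}\subseteq X_hF^h\) does not directly transfer through a left factor \(M\), so this alternative would be harder, and I would hope the set-wise convention holds.
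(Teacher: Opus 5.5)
Your proposal is correct and follows the paper's proof essentially step for step. The paper first proves \((FM)^h=F^hM\) from \(MF\subseteq FM\) and \(e\in M\) (Lemma~\ref{lem:powers-FM}), then transfers the covering \(F^{rh}\subseteq X_hF^h\) by multiplying on the right by \(M\) (Proposition~\ref{prop:semigroup-lift}), and finally invokes Theorem~\ref{thm:symmetric-main} or Theorem~\ref{thm:asymmetric-main} for \(F\). The paper also defines \(N_G(M)\) with the set-wise equality \(gMg^{-1}=M\), so the convention issue you raise does not arise.
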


\section{Preliminaries}

\subsection{Virtually nilpotent groups and word balls}

A group $G$ is \emph{virtually nilpotent} if it contains a nilpotent subgroup of finite index.  If $G$ is virtually nilpotent and $H\le G$ is finitely generated, then $H$ is virtually nilpotent.  Indeed, if $N\le G$ is nilpotent of finite index, then $H\cap N\le H$ is nilpotent and has finite index in $H$.

Let $S\subseteq \Gamma$ be finite, symmetric, contain $e$, and generate the finitely generated group $\Gamma$.  The word length and word balls are
\[
\ell_S(g):=\min\{n\ge 0:g\in S^n\},
\quad
B_S(n):=\{g\in\Gamma:\ell_S(g)\le n\}.
\]
Since $e\in S$, one has
\[
B_S(n)=S^n
\quad\text{for every integer }n\ge 0.
\]
The associated left-invariant word metric is
\[
d_S(g,h):=\ell_S(g^{-1}h).
\]

\subsection{Bass--Guivarc'h growth estimates}

We shall use the polynomial-growth bounds furnished by the Bass--Guivarc'h
theorem \cite{Bass1972,Guivarch1973} in the following standard form.

\begin{theorem}[Bass--Guivarc'h polynomial growth estimates]\label{thm:bass-guivarch}
Let $\Gamma$ be a finitely generated virtually nilpotent group and let $S$ be a finite symmetric generating set containing $e$.  Then there exist constants $C_1,C_2>0$ and an integer $d=d(\Gamma)\ge 0$ such that, for all integers $n\ge 1$,
\[
C_1 n^d\le |B_S(n)|\le C_2 n^d,
\]
with the convention that if $\Gamma$ is finite one may take $d=0$ after increasing constants. The integer $d$ is the \emph{growth degree} of $\Gamma$, also called the
Bass--Guivarc'h dimension. It is independent of the chosen finite symmetric generating set
$S$. If $\Gamma$ is torsion-free nilpotent with lower central series
\[
\Gamma=\Gamma_1\rhd \Gamma_2\rhd\cdots\rhd \Gamma_{s+1}=\{e\},
\quad
\Gamma_{i+1}=[\Gamma,\Gamma_i],
\]
then
\[
d=\sum_{i=1}^s i\,\rank_{\mathbb Z}(\Gamma_i/\Gamma_{i+1}).
\]
where \(\rank_{\mathbb Z}\) denotes the rank of the finitely generated abelian
group, equivalently
\[
\rank_{\mathbb Z}(\Gamma_i/\Gamma_{i+1})
=
\dim_{\mathbb Q}\bigl((\Gamma_i/\Gamma_{i+1})\otimes_{\mathbb Z}\mathbb Q\bigr).
\]
For general finitely generated virtually nilpotent $\Gamma$, the same integer
is computed after passing to a finite-index torsion-free nilpotent subgroup; the
resulting degree is independent of the chosen subgroup.
\end{theorem}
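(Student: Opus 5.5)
This is the classical theorem of Bass and Guivarc'h, and the plan is to reproduce its standard proof in three stages.

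\textbf{Stage 1: reductions.} First remove the dependence on \(S\). If \(S,S'\) are two finite symmetric generating sets containing \(e\), then \(S'\subseteq S^k\) and \(S\subseteq S'^k\) for some \(k\). Hence \(B_{S'}(n)\subseteq B_S(kn)\) and \(B_S(n)\subseteq B_{S'}(kn)\), so two-sided polynomial bounds of degree \(d\) for one generating set transfer to any other, with new constants. This also shows \(d\) is independent of \(S\).

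Next pass to a finite-index subgroup. A finitely generated virtually nilpotent \(\Gamma\) contains a finite-index nilpotent subgroup, and that subgroup is finitely generated. A finitely generated nilpotent group has finite torsion subgroup, and by Mal'cev it contains a torsion-free subgroup of finite index. So let \(\Lambda\le\Gamma\) be torsion-free nilpotent with \([\Gamma:\Lambda]<\infty\). Fix coset representatives \(T\) and a Schreier generating set \(S_\Lambda\) of \(\Lambda\). The inclusion \(\Lambda\hookrightarrow\Gamma\) is then bi-Lipschitz on word metrics and \(\Gamma=\Lambda T\). This gives
\[
|B_{S_\Lambda}(n/c)|\le|B_S(n)|\le|T|\,|B_{S_\Lambda}(cn)|.
\]
So it suffices to treat torsion-free nilpotent \(\Lambda\). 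Independence of \(d\) from the choice of \(\Lambda\) then follows from these same comparisons. The case of finite \(\Gamma\) is trivial with \(d=0\).

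\textbf{Stage 2: upper bound for torsion-free nilpotent \(\Lambda\).} Choose a Mal'cev basis \(u_1,\dots,u_m\) adapted to the lower central series. Each element then has a unique form \(u_1^{t_1}\cdots u_m^{t_m}\), and assign weight \(w_j=i\) when \(u_j\in\Lambda_i\setminus\Lambda_{i+1}\), working modulo torsion in \(\Lambda_i/\Lambda_{i+1}\) and refining to a series with infinite cyclic factors. The key claim is the collection estimate: every \(g\in B_S(n)\) has coordinates satisfying \(|t_j|\le Cn^{w_j}\). Prove this by induction on the nilpotency class, using the Hall collection process. Moving a letter of weight \(a\) past a letter of weight \(b\) creates commutators of weight at least \(a+b\). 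The number of weight-\(i\) commutators produced from a word of length \(n\) is \(O(n^i)\), with polynomial identities controlling the exponents. Counting integer vectors in the resulting box gives
\[
|B_S(n)|\le C_2\prod_j n^{w_j}=C_2n^d,
\qquad
d=\sum_i i\,\rank(\Lambda_i/\Lambda_{i+1}).
\]

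\textbf{Stage 3: lower bound.} Show the reverse inclusion: every element whose coordinates satisfy \(|t_j|\le cn^{w_j}\) lies in \(B_S(n)\). By induction on weight, it suffices to express \(u_j^{t}\) with \(|t|\le n^{w}\) as a word of length \(O(n)\). For \(u_j\) an \(i\)-fold commutator \([x_1,[x_2,\dots]]\) of generators, the identity \([x^a,y^b]\equiv[x,y]^{ab}\) modulo higher terms, and its iterated version, realize \(u_j^{t}\) with \(t\approx a_1\cdots a_i\), each \(a_k\le n\), in length \(O(n)\). Write a general \(t\) in base \(n\) with \(O(1)\) digits and correct the error, which lies in deeper terms of the series, by induction. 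Since the box contains at least \(c'n^d\) elements, this yields \(C_1n^d\le|B_S(n)|\).

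I expect the main obstacle to be the commutator-collection bookkeeping: the upper bound in Stage 2 and the efficient-word construction in Stage 3, which must hold uniformly at every step of the induction on the nilpotency class. This is where I would follow Bass's argument closely rather than attempt new estimates.
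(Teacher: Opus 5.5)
The paper gives no proof of this theorem. It quotes the result from Bass and Guivarc'h, and later uses only the two-sided bounds $C_1n^d\le|B_S(n)|\le C_2n^d$, in Proposition~\ref{prop:covering}. Your outline is the standard argument behind that citation, and its structure is correct: quasi-isometry invariance to pass to a finite-index torsion-free subgroup, then comparing word balls with weighted boxes in Mal'cev coordinates. You defer the collection estimates to Bass, so at the decisive step your version is no more self-contained than the citation. What it adds is a transparent account of where $d=\sum_i i\,\rank_{\mathbb Z}(\Lambda_i/\Lambda_{i+1})$ comes from.

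Two points need care if you write it out.

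First, the lower central quotients of a torsion-free nilpotent group can have torsion. In $\langle x,y,z : [x,y]=z^2,\ [x,z]=[y,z]=e\rangle$ one has $\Lambda_2=\langle z^2\rangle$ and $\Lambda/\Lambda_2\cong\mathbb Z^2\times\mathbb Z/2\mathbb Z$. So the lower central series cannot literally be refined to a series with only infinite cyclic factors. There are two fixes:
\begin{enumerate}[label=\textup{(\alph*)}]
\item allow finite cyclic factors, with bounded exponents contributing nothing to $d$; or
\item build the Mal'cev basis on the isolators $I(\Lambda_i):=\{g\in\Lambda: g^k\in\Lambda_i \text{ for some } k\ge1\}$. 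These form a central series with torsion-free factors, satisfy $[I(\Lambda_i),I(\Lambda_j)]\subseteq I(\Lambda_{i+j})$, and have the same ranks as $\Lambda_i/\Lambda_{i+1}$.
\end{enumerate}

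Second, in Stage 3 the weight-$i$ basis elements need not be commutators of generators. Moreover, correcting the error by induction requires knowing that its deeper coordinates are $O(n^{w_j})$. That is, the Mal'cev multiplication polynomials must have weighted degree at most $w_j$ in the $j$-th coordinate. The same fact is implicitly needed in Stage 2 to convert a collected word into Mal'cev coordinates.

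For the lower bound you can bypass both issues by inducting on the class over all finitely generated nilpotent groups. Let $\bar\Lambda:=\Lambda/\Lambda_s$, with $\bar S$ the image of $S$. Choosing lifts of elements of $B_{\bar S}(n)$ gives $|B_S(2n)|\ge|B_{\bar S}(n)|\,|\Lambda_s\cap B_S(n)|$. Because $\Lambda_s$ is central, the right-normed $s$-fold commutator map is exactly multilinear. Hence $[x_1^{a_1},[x_2^{a_2},\dots,x_s^{a_s}]]=c^{a_1\cdots a_s}$ with $c=[x_1,[x_2,\dots,x_s]]$, with no error term. Take $\rank_{\mathbb Z}\Lambda_s$ independent such commutators and write exponents below $n^s$ in base $n$. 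This yields $|\Lambda_s\cap B_S(Cn)|\ge n^{s\,\rank_{\mathbb Z}\Lambda_s}$. The induction then closes, since $\bar\Lambda$ has the same lower central factors as $\Lambda$ below level $s$.
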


\begin{remark}
For the arguments below, we use two-sided polynomial bounds, not exact asymptotics.  In particular, the proofs do not depend on the finer Pansu \cite{Pansu1983}, or Breuillard limit-shape results \cite{Breuillard2014}.
\end{remark}

\section{A packing-covering result for word balls}

We also use the following standard covering lemma of Ruzsa.

\begin{lemma}[Ruzsa covering lemma {\cite[Lemma~3.6]{Tao2008}}, \cite{Ruzsa1989}]
\label{lem:ruzsa-covering}
Let \(E,F\) be finite non-empty subsets of a group \(G\).  Suppose that
\[
|EF|\le K|F|
\]
for some \(K\ge 1\).  Then there is a set \(X\subseteq E\) with
\[
|X|\le K
\]
such that
\[
E\subseteq XFF^{-1}.
\]
\end{lemma}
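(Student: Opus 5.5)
We prove the Ruzsa covering lemma by a maximal-packing argument. Choose \(X\subseteq E\) maximal subject to the condition that the left translates \(xF\), \(x\in X\), are pairwise disjoint. Such a maximal set exists because \(E\) is finite: start from any singleton \(\{x_0\}\subseteq E\) and keep adding elements of \(E\) whose translate of \(F\) is disjoint from all translates already chosen, until no further element can be added.

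\emph{Bounding \(|X|\).} Since \(X\subseteq E\), the disjoint union \(\bigsqcup_{x\in X}xF\) is contained in \(EF\). Left translation is injective, so each \(xF\) has exactly \(|F|\) elements, and disjointness gives
\[
|X|\,|F|=\Bigl|\bigsqcup_{x\in X}xF\Bigr|\le |EF|\le K|F|.
\]
As \(F\) is non-empty, this yields \(|X|\le K\).

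\emph{Covering.} Let \(y\in E\). If \(y\in X\), then \(y=y\,f f^{-1}\in XFF^{-1}\) for any \(f\in F\). If \(y\notin X\), then by maximality \(yF\cap xF\neq\emptyset\) for some \(x\in X\); otherwise \(X\cup\{y\}\) would be a larger admissible set. Hence \(yf=xf'\) for some \(f,f'\in F\), which gives \(y=xf'f^{-1}\in XFF^{-1}\). Thus \(E\subseteq XFF^{-1}\), and the lemma follows.

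The one step needing care is the direction of the translates. We use left translates \(xF\), so that disjointness takes place inside \(EF\) and the relation \(yf=xf'\) solves to \(y\in xFF^{-1}\), which is exactly the stated form.
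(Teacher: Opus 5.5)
Your proof is correct and is the standard maximal disjoint-translates argument: you pack left translates $xF$, $x\in X\subseteq E$, inside $EF$ to get $|X|\le K$, and use maximality to get $yf=xf'$, hence $y\in XFF^{-1}$. The paper gives no proof of its own; it defers to Tao's Lemma~3.6 and to Ruzsa, and it names this same packing argument when discussing the Breuillard--Tointon derivation, so your approach is essentially the one the paper relies on.
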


\begin{proof}
See \cite[Lemma~3.6]{Tao2008}, \cite{Ruzsa1989}.
\end{proof}

\begin{proposition}[Polynomial growth covering]\label{prop:covering}
Let $\Gamma$ be a finitely generated virtually nilpotent group and let $S$ be a finite symmetric generating set with $e\in S$.  Fix $R_0\ge 1$ and $0<\theta\le 1$.  Then there exist
\[
L=L(\Gamma,S,R_0,\theta)\in\mathbb N
\quad\text{and}\quad
h_0=h_0(\Gamma,S,R_0,\theta)\in\mathbb N
\]
such that, for every integer $h\ge h_0$, there is a set $Y_h\subseteq \Gamma$ with
\[
|Y_h|\le L
\]
and
\[
B_S(\lfloor R_0 h\rfloor)
\subseteq
Y_h B_S(\lfloor \theta h\rfloor).
\]
Equivalently,
\[
S^{\lfloor R_0h\rfloor}
\subseteq
Y_h S^{\lfloor \theta h\rfloor}.
\]
\end{proposition}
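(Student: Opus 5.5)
The plan is to apply the Ruzsa covering lemma (Lemma~\ref{lem:ruzsa-covering}) with $E=B_S(\lfloor R_0h\rfloor)$ and $F=B_S(m)$, where $m:=\lfloor \theta h/2\rfloor$. Since $S$ is symmetric and contains $e$, we have $F^{-1}=F$. Hence
\[
FF^{-1}=S^{m}S^{m}=S^{2m}\subseteq S^{\lfloor\theta h\rfloor}=B_S(\lfloor\theta h\rfloor),
\]
because $2m\le\lfloor\theta h\rfloor$ and the powers of $S$ increase. So it suffices to find a constant $K$, independent of $h$, with $|EF|\le K|F|$. The lemma then yields $X\subseteq E$ with $|X|\le K$ and $E\subseteq XFF^{-1}\subseteq X B_S(\lfloor\theta h\rfloor)$. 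We take $Y_h:=X$ and $L:=\lfloor K\rfloor$; since $|X|$ is an integer, $|X|\le K$ gives $|X|\le L$.

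To bound the ratio, note that
\[
EF=S^{\lfloor R_0h\rfloor}S^{m}=S^{\lfloor R_0 h\rfloor+m}=B_S(\lfloor R_0h\rfloor+m).
\]
Choose $h_0$ so large that $m=\lfloor\theta h/2\rfloor\ge1$ for $h\ge h_0$; for instance $h_0\ge 2/\theta$ works. Then Theorem~\ref{thm:bass-guivarch} gives
\[
|EF|\le C_2(\lfloor R_0h\rfloor+m)^d\le C_2\bigl((R_0+\theta)h\bigr)^d,
\qquad
|F|\ge C_1 m^d\ge C_1\bigl(\theta h/4\bigr)^d .
\]
The last inequality uses $\lfloor x\rfloor\ge x/2$ for $x\ge1$. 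Consequently
\[
\frac{|EF|}{|F|}\le \frac{C_2}{C_1}\Bigl(\frac{4(R_0+\theta)}{\theta}\Bigr)^d=:K ,
\]
which depends only on $\Gamma,S,R_0,\theta$. If necessary we enlarge $K$ so that $K\ge1$. The equivalent formulation with powers of $S$ is immediate from $B_S(n)=S^n$.

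The only delicate point is bookkeeping with the floors. We must make sure that $m\ge1$, so that the lower Bass--Guivarc'h bound applies with $n\ge1$, and that $2m\le\lfloor\theta h\rfloor$ holds. This is true since $2\lfloor x/2\rfloor\le\lfloor x\rfloor$. The finite case $d=0$ is covered by the same computation under the stated convention. There is no real obstacle beyond this: the content of the proposition is that polynomial growth makes the doubling-type ratio $|B(Rh)|/|B(\theta h/2)|$ bounded uniformly in $h$. Ruzsa's lemma converts this ratio bound into a covering by boundedly many translates.
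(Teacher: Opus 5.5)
Your proposal is correct and follows essentially the same route as the paper: you apply Ruzsa's covering lemma with $E=B_S(\lfloor R_0h\rfloor)$ and $F=B_S(\lfloor\theta h/2\rfloor)$, which coincides with the paper's $q_h=\lfloor\lfloor\theta h\rfloor/2\rfloor$; you bound $|EF|/|F|$ via Bass--Guivarc'h; and you use $FF^{-1}\subseteq B_S(\lfloor\theta h\rfloor)$. The only differences are cosmetic: your constant $4(R_0+\theta)/\theta$ versus the paper's $3(R_0+\theta)/\theta$, and your treatment of finite $\Gamma$ through the $d=0$ convention instead of simply taking $Y_h=\Gamma$.
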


\begin{proof}[Proof of Proposition~\ref{prop:covering}]
If \(\Gamma\) is finite, take \(Y_h=\Gamma\) for all \(h\).  Thus assume that
\(\Gamma\) is infinite.

By Theorem~\ref{thm:bass-guivarch}, there are constants \(C_1,C_2>0\) and an
integer \(d\ge1\) such that
\[
C_1n^d\le |B_S(n)|\le C_2n^d
\quad\text{for all }n\ge1.
\]

For \(h\) large, put
\[
R_h:=\lfloor R_0h\rfloor,\qquad
M_h:=\lfloor \theta h\rfloor,\qquad
q_h:=\left\lfloor\frac{M_h}{2}\right\rfloor .
\]
We shall apply Lemma~\ref{lem:ruzsa-covering} with
\[
E:=B_S(R_h),\qquad F:=B_S(q_h).
\]
Since \(S\) is symmetric and contains \(e\),
\[
EF=B_S(R_h)B_S(q_h)\subseteq B_S(R_h+q_h).
\]
Therefore
\[
\frac{|EF|}{|F|}
\le
\frac{|B_S(R_h+q_h)|}{|B_S(q_h)|}
\le
\frac{C_2}{C_1}\left(\frac{R_h+q_h}{q_h}\right)^d .
\]
For all sufficiently large \(h\),
\[
q_h\ge \frac{\theta h}{3}
\quad\text{and}\quad
R_h+q_h\le (R_0+\theta)h.
\]
Hence
\[
\frac{|EF|}{|F|}
\le
\frac{C_2}{C_1}
\left(\frac{3(R_0+\theta)}{\theta}\right)^d
:=K_0.
\]
By Lemma~\ref{lem:ruzsa-covering}, there is a set
\[
Y_h\subseteq B_S(R_h)
\]
such that
\[
|Y_h|\le K_0
\]
and
\[
B_S(R_h)\subseteq Y_h B_S(q_h)B_S(q_h)^{-1}.
\]
Since \(S\) is symmetric,
\[
B_S(q_h)B_S(q_h)^{-1}
=
B_S(q_h)^2
\subseteq B_S(2q_h)
\subseteq B_S(M_h).
\]
Thus
\[
B_S(\lfloor R_0h\rfloor)
\subseteq
Y_hB_S(\lfloor\theta h\rfloor).
\]
Taking
\[
L:=\left\lceil K_0\right\rceil
\]
and increasing \(h_0\) if necessary proves the proposition.
\end{proof}

\subsection{Alternative derivation}

Proposition~\ref{prop:covering} can also be derived from the large-scale doubling
theorem of Breuillard--Tointon \cite{BreuillardTointon2016}. They prove the following result: for every \(K\ge1\), there exist
constants \(n_0(K)\in\mathbb N\) and \(\Theta(K)\ge1\) such that, if \(S\) is a
finite symmetric subset of a group, \(e\in S\), and
\[
|S^{2n+1}|\le K|S^n|
\]
for some \(n\ge n_0(K)\), then
\[
|S^{cm}|\le \Theta(K)^c |S^m|
\]
for every \(m\ge n\) and every \(c\in\mathbb N\)
\cite[Theorem~1.1]{BreuillardTointon2016}.  The Ruzsa-covering step below is the
same standard maximal-disjoint-translates argument also used in
\cite[Lemma~2.2]{BreuillardTointon2016}.

\begin{proposition}
\label{prop:BT-covering}
Let \(\Gamma\) be a finitely generated virtually nilpotent group, and let
\(S\subseteq\Gamma\) be finite, symmetric, contain \(e\), and generate \(\Gamma\).
Fix \(R_0\ge1\) and \(0<\theta\le1\).  Then there exist
\(L\in\mathbb N\) and \(h_0\in\mathbb N\) such that, for every \(h\ge h_0\),
there is \(Y_h\subseteq\Gamma\) with
\[
|Y_h|\le L
\quad\text{and}\quad
S^{\lfloor R_0h\rfloor}\subseteq Y_hS^{\lfloor\theta h\rfloor}.
\]
\end{proposition}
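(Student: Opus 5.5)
We derive Proposition~\ref{prop:BT-covering} from the Breuillard--Tointon doubling theorem quoted above, replacing the Bass--Guivarc'h volume estimates by a single doubling bound at one scale. We then run the same Ruzsa-covering step as in Proposition~\ref{prop:covering}. If \(\Gamma\) is finite we take \(Y_h=\Gamma\), so assume \(\Gamma\) is infinite.

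\textbf{Step 1: a constant \(K\).} By Theorem~\ref{thm:bass-guivarch} (or simply because \(\Gamma\) has polynomial growth), there is \(K\ge1\) with \(|S^{2n+1}|\le K|S^n|\) for infinitely many \(n\). Indeed, if \(|S^{2n+1}|>K|S^n|\) held for all large \(n\), then iterating would give growth faster than any fixed polynomial once \(K>3^{d}\). Using the two-sided bounds directly, one may even take \(K:=\frac{C_2}{C_1}3^d\), valid for all \(n\ge1\). Fix such an \(n\ge n_0(K)\) and set \(\Theta:=\Theta(K)\). The theorem then gives \(|S^{cm}|\le\Theta^c|S^m|\) for every \(m\ge n\) and every \(c\in\mathbb N\).

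\textbf{Step 2: the covering.} As before, put
\[
R_h:=\lfloor R_0h\rfloor,\qquad M_h:=\lfloor\theta h\rfloor,\qquad q_h:=\lfloor M_h/2\rfloor,
\]
and let \(c:=\lceil 3(R_0+\theta)/\theta\rceil\), an integer independent of \(h\). For \(h\) large we have \(q_h\ge n\) and \(R_h+q_h\le c\,q_h\). Hence
\[
|S^{R_h}S^{q_h}|\le|S^{cq_h}|\le\Theta^{c}|S^{q_h}|.
\]
Apply Lemma~\ref{lem:ruzsa-covering} with \(E=S^{R_h}\), \(F=S^{q_h}\) and \(K_0=\Theta^c\). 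This produces \(Y_h\) with \(|Y_h|\le\Theta^c\) and
\[
S^{R_h}\subseteq Y_hS^{q_h}S^{-q_h}=Y_hS^{2q_h}\subseteq Y_hS^{M_h},
\]
where the equality uses symmetry of \(S\) and the final inclusion uses \(e\in S\). We take \(L:=\lceil\Theta^c\rceil\), and choose \(h_0\) so that \(q_h\ge n\) and the inequality \(R_h+q_h\le cq_h\) hold for all \(h\ge h_0\).

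\textbf{Main obstacle.} The only delicate point is Step 1: we must produce a scale \(n\ge n_0(K)\) at which the doubling hypothesis holds, with \(K\) depending only on \(\Gamma\) and \(S\). The two-sided Bass--Guivarc'h bounds settle this immediately. The remaining bookkeeping is to make \(c\) an integer, which is why we round up, and to ensure \(q_h\ge n\) for all large \(h\); both are routine.
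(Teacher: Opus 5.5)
Your proposal is correct and follows the paper's proof essentially step for step: a doubling constant $K$ obtained from the two-sided Bass--Guivarc'h bounds, the Breuillard--Tointon estimate $|S^{cm}|\le\Theta^c|S^m|$ from a fixed scale onward, and Ruzsa covering with $E=S^{R_h}$, $F=S^{q_h}$ and $c=\lceil 3(R_0+\theta)/\theta\rceil$. Your explicit $K=\frac{C_2}{C_1}3^d$, valid for every $n\ge1$, fixes the scale $n\ge n_0(K)$ a little more cleanly than the paper's choice of $n_\ast$. The heuristic you give first is slightly overstated: iterating only gives growth of exponent $\log_2 K$, not faster than every polynomial. It is not needed, though.
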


\begin{proof}
If \(\Gamma\) is finite, take \(Y_h=\Gamma\).  Assume \(\Gamma\) is infinite.

By Bass--Guivarc'h growth estimates, there are constants \(C_1,C_2>0\) and
\(d\ge1\) such that
\[
C_1n^d\le |S^n|\le C_2n^d
\quad\text{for all }n\ge1.
\]
Thus there is \(K\ge1\), depending on \((\Gamma,S)\), such that
\[
|S^{2n+1}|\le K|S^n|
\]
for all sufficiently large \(n\).  Choose \(n_\ast\) large enough that the
hypothesis of \cite[Theorem~1.1]{BreuillardTointon2016} holds at some scale
\(n\ge n_0(K)\) with \(n\le n_\ast\).  Then there is a constant
\(\Theta=\Theta(K)\) such that
\[
|S^{cm}|\le \Theta^c |S^m|
\]
for every \(m\ge n_\ast\) and every \(c\in\mathbb N\).

For \(h\) large, put
\[
R_h:=\lfloor R_0h\rfloor,\qquad
M_h:=\lfloor\theta h\rfloor,\qquad
q_h:=\left\lfloor\frac{M_h}{2}\right\rfloor .
\]
Choose
\[
c:=\left\lceil \frac{3(R_0+\theta)}{\theta}\right\rceil .
\]
After increasing \(h_0\), we may assume that \(q_h\ge n_\ast\) and
\[
R_h+q_h\le c q_h
\]
for all \(h\ge h_0\).

Apply Lemma~\ref{lem:ruzsa-covering} with
\[
E:=S^{R_h},
\qquad
F:=S^{q_h}.
\]
Since
\[
EF\subseteq S^{R_h+q_h}\subseteq S^{cq_h},
\]
the Breuillard--Tointon estimate gives
\[
|EF|\le |S^{cq_h}|\le \Theta^c |S^{q_h}|=\Theta^c |F|.
\]
Therefore Lemma~\ref{lem:ruzsa-covering} gives a set
\(Y_h\subseteq S^{R_h}\) with
\[
|Y_h|\le \Theta^c
\]
and
\[
S^{R_h}\subseteq Y_hS^{q_h}(S^{q_h})^{-1}.
\]
Since \(S\) is symmetric,
\[
S^{q_h}(S^{q_h})^{-1}
=
S^{q_h}S^{q_h}
=
S^{2q_h}
\subseteq S^{M_h}.
\]
Hence
\[
S^{\lfloor R_0h\rfloor}
\subseteq
Y_hS^{\lfloor\theta h\rfloor}.
\]
Taking
\[
L:=\left\lceil \Theta^c\right\rceil
\]
finishes the proof.
\end{proof}

\begin{remark}
The rest of the paper uses
the direct Bass--Guivarc'h proof in order to keep the argument self-contained in
the virtually nilpotent setting.
\end{remark}

\begin{lemma}[Adding the identity at bounded covering cost]
\label{lem:add-identity-padding}
Let \(A\subseteq G\) be non-empty, and suppose that
\[
e\in A^p
\]
for some integer \(p\ge1\).  Put
\[
A_e:=A\cup\{e\}.
\]
Then there exists a finite set \(E\subseteq \langle A\rangle\), depending only on
\(A\) and \(p\), such that
\[
A_e^h\subseteq E A^h
\]
for every \(h\ge1\).

Consequently, if \(A_e\) is an asymptotic \((r,l)\)-approximate group, then \(A\)
is an asymptotic \((r,l|E|)\)-approximate group.
\end{lemma}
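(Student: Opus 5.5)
We need \(A_e^h\subseteq EA^h\) for every \(h\ge1\), with \(E\) finite in \(\langle A\rangle\) depending only on \(A,p\). The plan is to exploit \(e\in A^p\) to convert missing letters into padding by copies of a word representing \(e\), and to absorb the leftover (fewer than \(p\)) missing letters into a finite set of inverses.

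\textbf{Step 1: describe \(A_e^h\).} Every element of \(A_e^h\) is a product of \(h\) factors, each either in \(A\) or equal to \(e\). Deleting the identity factors, we get
\[
A_e^h=\bigcup_{j=0}^{h}A^j\qquad(A^0=\{e\}).
\]
So it suffices to show \(A^j\subseteq EA^h\) for all \(0\le j\le h\).

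\textbf{Step 2: pad with \(A^p\).} Since \(e\in A^p\), we have \(A^j=A^je\subseteq A^jA^{p}=A^{j+p}\). Iterating gives \(A^j\subseteq A^{j+kp}\) for all \(k\ge0\). Given \(0\le j\le h\), write \(h-j=kp+t\) with \(0\le t<p\). Then
\[
A^j\subseteq A^{h-t}.
\]

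\textbf{Step 3: handle the remainder.} If \(t=0\) we are done. For \(1\le t<p\), use \(e\in A^p\) once more:
\[
A^{h-t}\subseteq A^{h-t}A^{p}=A^{h-t+p}=A^{p-t}A^{h}.
\]
The right-hand side is \(A^{p-t}A^h\), which is a left translate-union of \(A^h\) by the set \(A^{p-t}\). That set is not finite if \(A\) is infinite, and the statement does not assume \(A\) finite, so this needs care.

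To get a finite \(E\), fix a single word \(e=b_1\cdots b_p\) with \(b_i\in A\). Instead of padding by arbitrary elements of \(A^{p}\), pad by this specific word. Then
\[
A^{h-t}=A^{h-t}\,(b_1\cdots b_p)\,(b_1\cdots b_p)^{-1}.
\]
A cleaner choice is to note that \(e=b_1\cdots b_p\) gives \((b_{1}\cdots b_{p-t})^{-1}=b_{p-t+1}\cdots b_p\). For \(x\in A^{h-t}\) write
\[
x=(b_1\cdots b_{p-t})^{-1}\cdot\bigl(b_1\cdots b_{p-t}\,x\bigr)=(b_{p-t+1}\cdots b_p)\,\bigl(b_1\cdots b_{p-t}x\bigr),
\]
and \(b_1\cdots b_{p-t}x\in A^{p-t}A^{h-t}\), which has length \(h+p-2t\), not \(h\). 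So the lengths must be balanced more carefully.

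The simplest correct route is to multiply on the right instead:
\[
x=\bigl(x\,b_1\cdots b_t\bigr)\,(b_1\cdots b_t)^{-1}.
\]
This puts the correction on the wrong side, however. Since the covering has left translates, we conjugate: \(x\,b_1\cdots b_t=c\,x'\) is not available in a nonabelian group.

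\textbf{Step 3 revised.} Go back to Step 2 and choose the padding so that lengths match exactly. Take \(x\in A^{h-t}\). Then
\[
x=(b_{t+1}\cdots b_p)^{-1}\,(b_{t+1}\cdots b_p)\,x .
\]
Here \((b_{t+1}\cdots b_p)x\in A^{p-t}A^{h-t}=A^{h+p-2t}\). Its length equals \(h\) only if \(p=2t\), so this works only in special cases, and a different count is needed.

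Better: pad \(A^j\) up to length \(h+p-t'\) where \(t'\equiv\) the appropriate remainder. Write \(h-j=kp-s\) with \(0\le s<p\), that is, take \(k=\lceil (h-j)/p\rceil\). Then \(A^j\subseteq A^{j+kp}=A^{h+s}\). Splitting off the first \(s\) letters, \(A^{h+s}=A^sA^h\). A general word \(a_1\cdots a_s\) ranges over the possibly infinite set \(A^s\), so we instead place a fixed block first. Since the padding consists of copies of \(b_1\cdots b_p\), insert them at the left: \(A^j\subseteq (b_1\cdots b_p)^kA^j\). The first \(s\) letters of this word are then \(b_1\cdots b_s\), a fixed element, and the rest lies in \(A^h\). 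Hence
\[
A^j\subseteq \{b_1\cdots b_s\}A^h .
\]

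\textbf{Conclusion.} Take
\[
E:=\{b_1\cdots b_s:0\le s<p\}\subseteq\langle A\rangle,
\]
which has \(|E|\le p\). Edge cases such as \(k=0\) occur only for \(j=h\), where \(s=0\). This gives \(A_e^h\subseteq EA^h\) for all \(h\ge1\).

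For the consequence: if \(A_e^{rh}\subseteq X_hA_e^h\) with \(|X_h|\le l\), then since \(A\subseteq A_e\),
\[
A^{rh}\subseteq A_e^{rh}\subseteq X_hA_e^h\subseteq X_hEA^h,
\]
and \(|X_hE|\le l|E|\).

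The main obstacle is keeping \(E\) finite when \(A\) is infinite. This is resolved by always padding on the left with a fixed word for \(e\), so that the overflow beyond length \(h\) is a prefix of that fixed word.
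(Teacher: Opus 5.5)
Your final argument, from the ``Better'' paragraph onward, is correct and is essentially the paper's proof. The paper fixes arbitrary $w_j\in A^j$ for $0\le j<p$ (with $w_0=e$) and sets $E=\{w_j^{-1}\}$. It then left-multiplies $g\in A^k$ by the $w_j$ with $j\equiv h-k \pmod p$ and pads up to length $h$ using $e\in A^p$. Your prefixes $b_1\cdots b_s=(b_{s+1}\cdots b_p)^{-1}$ are exactly this construction with the particular choice $w_j:=b_{p-j+1}\cdots b_p$ for $1\le j<p$. In a write-up, drop the abandoned Step~3 attempts, since only the final left-padding argument is needed.
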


\begin{proof}
For each residue \(j\in\{0,1,\dots,p-1\}\), choose an element \(w_j\in A^j\),
with the convention \(w_0=e\).  This is possible because \(A\) is non-empty.
Set
\[
E:=\{w_j^{-1}:0\le j<p\}.
\]

Let \(h\ge1\).  Since \(A_e=A\cup\{e\}\), every element of \(A_e^h\) lies in
\(A^k\) for some \(0\le k\le h\), where \(A^0:=\{e\}\).  Let \(g\in A^k\), and put
\[
n:=h-k.
\]
Choose \(j\in\{0,\dots,p-1\}\) with
\[
j\equiv n \pmod p.
\]
Then \(j\le n\), and \(n-j\) is a non-negative multiple of \(p\).  Since
\(e\in A^p\), we have \(A^m\subseteq A^{m+p}\) for every \(m\ge0\).  Iterating,
\[
w_j g\in A^{j+k}\subseteq A^{j+k+(n-j)}=A^h.
\]
Therefore
\[
g\in w_j^{-1}A^h\subseteq EA^h.
\]
Thus \(A_e^h\subseteq EA^h\) for every \(h\ge1\).

Now suppose \(A_e\) is asymptotic \((r,l)\)-approximate.  For all sufficiently large
\(h\), choose \(X_h\) with \(|X_h|\le l\) and
\[
A_e^{rh}\subseteq X_h A_e^h.
\]
Since \(A^{rh}\subseteq A_e^{rh}\), the first part gives
\[
A^{rh}\subseteq A_e^{rh}\subseteq X_h A_e^h\subseteq X_h E A^h.
\]
Since \(|X_hE|\le l|E|\), the conclusion follows.
\end{proof}

\begin{remark}[Dependence of constants]
The covering number obtained in Proposition~\ref{prop:covering} depends on the growth constants for $(\Gamma,S)$, on $R_0$, and on $\theta$.  It is uniform in $h$.  No attempt is made here to optimize the constant. 
\end{remark}

\section{An inner symmetric ball criterion}

\begin{proposition}\label{prop:inner-ball-criterion}
Let \(G\) be virtually nilpotent, let \(A\subseteq G\) be finite and non-empty, and set
\[
\Gamma:=\langle A\rangle.
\]
Let \(S\subseteq \Gamma\) be a finite symmetric generating set with \(e\in S\).  Suppose
that there exist \(\theta>0\) and \(h_1\in\mathbb N\) such that
\[
B_S(\lfloor \theta h\rfloor)\subseteq A^h
\]
for every \(h\ge h_1\).  Then \(A\) is an asymptotic approximate group.
\end{proposition}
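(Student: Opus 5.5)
Fix \(r\in\mathbb N\). The plan is to sandwich \(A^{rh}\) inside a large word ball, cover that ball by boundedly many translates of the small inner ball using Proposition~\ref{prop:covering}, and then use the hypothesis \(B_S(\lfloor\theta h\rfloor)\subseteq A^h\) to replace the small ball by \(A^h\).

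First, the outer bound. Since \(A\) is finite and \(S\) generates \(\Gamma=\langle A\rangle\), the number
\[
c:=\max_{a\in A}\ell_S(a)
\]
is finite. Every element of \(A^{rh}\) is a product of \(rh\) letters, each of \(S\)-length at most \(c\). Because \(e\in S\), we get
\[
A^{rh}\subseteq S^{crh}=B_S(crh).
\]
If \(c=0\), then \(A=\{e\}\) and everything is trivial, so we may put \(R_0:=\max(1,cr)\). Then \(A^{rh}\subseteq B_S(\lfloor R_0h\rfloor)\).

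Second, the covering step. We may assume \(\theta\le 1\), since shrinking \(\theta\) preserves the hypothesis because balls are nested. Apply Proposition~\ref{prop:covering} with these \(R_0\) and \(\theta\). This gives \(L\) and \(h_0'\) such that, for \(h\ge h_0'\), there is \(Y_h\subseteq\Gamma\) with \(|Y_h|\le L\) and
\[
B_S(\lfloor R_0h\rfloor)\subseteq Y_hB_S(\lfloor\theta h\rfloor).
\]

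Third, combine. For \(h\ge h_0:=\max(h_0',h_1)\), the hypothesis gives
\[
A^{rh}\subseteq Y_hB_S(\lfloor\theta h\rfloor)\subseteq Y_hA^h.
\]
So \(X_h:=Y_h\subseteq\Gamma\) works with \(l(A,r):=L\), and \(A\) is an asymptotic \((r,L)\)-approximate group for every \(r\).

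There is no serious obstacle here. The only points needing care are that \(S\) generates \(\Gamma\), so that the lengths \(\ell_S(a)\) are finite, and the bookkeeping of floors and the normalization \(\theta\le 1\), \(R_0\ge1\). All the substantive content is in Proposition~\ref{prop:covering}.
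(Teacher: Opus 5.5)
Your proof is correct and is essentially the paper's argument: trap $A^{rh}$ in the outer ball $B_S(crh)$, cover that ball by boundedly many left translates of $B_S(\lfloor\theta h\rfloor)$ using Proposition~\ref{prop:covering}, and absorb the small ball into $A^h$ via the hypothesis. You should also say explicitly that $\Gamma$ is virtually nilpotent, being a finitely generated subgroup of $G$, so that the proposition applies.

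The paper's proof differs only in that it first passes to $A\cup\{e\}$, treats finite $\Gamma$ separately, and transfers back with Lemma~\ref{lem:add-identity-padding}. Your direct version shows this detour is unnecessary: $e\in S$ already gives $A^{rh}\subseteq B_S(crh)$, and the hypothesis puts the inner ball inside $A^h$ itself. As a bonus, you keep the covering number $L$ without the extra factor $|E|$.
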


\begin{proof}
Since $G$ is virtually nilpotent and $\Gamma=\langle A\rangle$ is finitely
generated, the subgroup $\Gamma$ is itself virtually nilpotent.  All word balls
below are taken inside $\Gamma$.

Since \(e\in B_S(\lfloor \theta h_1\rfloor)\subseteq A^{h_1}\), we may apply
Lemma~\ref{lem:add-identity-padding} with \(p:=h_1\).  Put
\[
B:=A\cup\{e\}.
\]

If \(\Gamma\) is finite, then for every \(h\ge h_1\) the inner-ball hypothesis gives
\(e\in A^h\).  Taking \(X_h=\Gamma\), we get
\[
A^{rh}\subseteq \Gamma=\Gamma A^h=X_hA^h
\]
for all sufficiently large \(h\).  Hence the result is immediate.

We may therefore assume that \(\Gamma\) is infinite.

Fix \(r\in\mathbb N\), and put
\[
\theta_0:=\min(\theta,1).
\]
Then
\[
B_S(\lfloor \theta_0 h\rfloor)\subseteq B_S(\lfloor \theta h\rfloor)\subseteq A^h\subseteq B^h
\]
for every \(h\ge h_1\).

Since \(B\) is finite and \(S\) generates \(\Gamma\), there is a constant
\[
C:=\max\bigl(1,\max_{b\in B}\ell_S(b)\bigr)<\infty.
\]
Hence
\[
B^{rh}\subseteq B_S(Crh)
\]
for every \(h\ge1\).

Apply Proposition~\ref{prop:covering} to \((\Gamma,S)\) with
\[
R_0=Cr
\quad\text{and}\quad
\theta=\theta_0.
\]
There exist \(L\) and \(h_0\) such that, for every \(h\ge h_0\), there is
\(X_h\subseteq\Gamma\) with \(|X_h|\le L\) and
\[
B_S(Crh)\subseteq X_hB_S(\lfloor\theta_0 h\rfloor).
\]
After increasing \(h_0\) so that \(h_0\ge h_1\), we obtain
\[
B^{rh}
\subseteq B_S(Crh)
\subseteq X_hB_S(\lfloor\theta_0 h\rfloor)
\subseteq X_hB^h.
\]
Thus \(B\) is asymptotic \((r,L)\)-approximate.  Since \(r\) was arbitrary, \(B\) is an
asymptotic approximate group.  Lemma~\ref{lem:add-identity-padding} now implies
that \(A\) is an asymptotic approximate group.
\end{proof}

\section{Symmetric case}

We now prove Theorem~\ref{thm:symmetric-main}.

\begin{proof}
First assume that \(e\in A\).  Since \(A\) is finite, symmetric, contains \(e\), and
generates \(\Gamma:=\langle A\rangle\), one has
\[
B_A(h)=A^h
\]
for every \(h\ge0\).  Thus Proposition~\ref{prop:inner-ball-criterion} applies
with \(S=A\), \(\theta=1\), and \(h_1=1\), showing that \(A\) is an asymptotic
approximate group.

Now let \(A\) be arbitrary finite, non-empty, and symmetric.  Then \(e\in A^2\):
indeed, if \(a\in A\), then \(a^{-1}\in A\), so \(aa^{-1}=e\in A^2\).  Put
\[
A_e:=A\cup\{e\}.
\]
The set \(A_e\) is finite, symmetric, and contains \(e\), so the first part shows
that \(A_e\) is an asymptotic approximate group.  Since \(e\in A^2\),
Lemma~\ref{lem:add-identity-padding} applies with \(p=2\), and implies that
\(A\) is an asymptotic approximate group.
\end{proof}

\begin{corollary}[Uniform covering of symmetric powers]\label{cor:symmetric-covering}
Let \(G\) be virtually nilpotent and let \(A\subseteq G\) be finite, symmetric, and
non-empty.  For every \(r\in\mathbb N\) there is \(L=L(A,r)\) such that
\[
\operatorname{cov}(A^{rh},A^h)\le L
\]
for all sufficiently large \(h\), where
\[
\operatorname{cov}(E,F):=\min\{|X|:E\subseteq XF\}.
\]
\end{corollary}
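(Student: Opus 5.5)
This corollary is essentially a restatement of Theorem~\ref{thm:symmetric-main} in the language of covering numbers, so the plan is to unwind the definition of \(\operatorname{cov}\) and invoke that theorem directly. Fix \(r\in\mathbb N\). By Theorem~\ref{thm:symmetric-main}, applied to the finite, non-empty, symmetric set \(A\), there are \(l=l(A,r)\in\mathbb N\) and \(h_0\in\mathbb N\) such that for every \(h\ge h_0\) some \(X_h\subseteq\langle A\rangle\) satisfies \(|X_h|\le l\) and \(A^{rh}\subseteq X_hA^h\).

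For each \(h\ge h_0\), the set \(X_h\) is therefore an admissible competitor in the minimum defining \(\operatorname{cov}(A^{rh},A^h)\). The minimum is over a non-empty set of non-negative integers, so it exists and satisfies
\[
\operatorname{cov}(A^{rh},A^h)\le |X_h|\le l .
\]
Setting \(L(A,r):=l(A,r)\) then gives the claim for all \(h\ge h_0\), which is exactly ``for all sufficiently large \(h\)''.

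If one wants an explicit value for \(L\), it can be traced back through the proof of Theorem~\ref{thm:symmetric-main}. There, the padded set \(A_e=A\cup\{e\}\) is handled by Proposition~\ref{prop:inner-ball-criterion}, which in turn uses Proposition~\ref{prop:covering}; Lemma~\ref{lem:add-identity-padding} with \(p=2\) then multiplies the resulting bound by \(|E|\le 2\). This yields
\[
L\le 2\left\lceil \frac{C_2}{C_1}\left(\frac{3(Cr+1)}{1}\right)^d\right\rceil,
\]
where \(C_1,C_2,d\) are the Bass--Guivarc'h constants for \((\langle A\rangle,A_e)\) and \(C=1\). This explicit form is optional.

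There is no genuine obstacle here. The only point needing care is that \(\operatorname{cov}\) is a minimum over coverings and so is automatically bounded by the size of any particular covering; the finite-group case needs no separate treatment, since it is already absorbed into Theorem~\ref{thm:symmetric-main}.
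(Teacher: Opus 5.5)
Your proposal is correct and matches the paper's proof: the paper likewise observes that the corollary is exactly the conclusion of Theorem~\ref{thm:symmetric-main}, since any admissible \(X_h\) with \(|X_h|\le l\) bounds \(\operatorname{cov}(A^{rh},A^h)\) from above. Your optional explicit constant, obtained by tracing through Propositions~\ref{prop:inner-ball-criterion} and~\ref{prop:covering} and the \(p=2\) padding step, is a harmless extra that the paper does not include.
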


\begin{proof}
This is exactly the conclusion of Theorem~\ref{thm:symmetric-main}.
\end{proof}

\subsection{A known converse}

\begin{proposition}[Biswas--Moens]
\label{prop:BM-converse}
Let \(G\) be a group and let \(A\subseteq G\) be finite and non-empty.  Suppose that
\(A\) is an asymptotic \((r,\ell)\)-approximate group for some \(r\ge 2\) and
\(\ell\in\mathbb N\).  Then the subgroup \(\langle A\rangle\) is virtually nilpotent.
In particular, if \(A\) generates \(G\), then \(G\) is virtually nilpotent.
\end{proposition}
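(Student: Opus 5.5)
The plan is to show that the covering hypothesis forces polynomial growth of $\langle A\rangle$ along a geometric sequence of scales, and then to conclude with Gromov's polynomial growth theorem, or more economically with the Breuillard--Tointon large-scale doubling theorem quoted above. First reduce to the case $e\in A$. Set $\Gamma:=\langle A\rangle$, which is finitely generated since $A$ is finite, and let $S:=A\cup A^{-1}\cup\{e\}$. The difficulty is that the hypothesis controls the positive powers $A^h$, which are not word balls. The key observation is that left translation preserves cardinality: from $A^{rh}\subseteq X_hA^h$ we get
\[
|A^{rh}|\le \ell\,|A^h| \quad\text{for all } h\ge h_0 .
\]
Iterating along $h, rh, r^2h,\dots$ gives $|A^{r^kh_0}|\le \ell^k|A^{h_0}|$. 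Since $r\ge2$, this yields $|A^n|\le C n^{\log_r \ell}$ for all $n$ of the form $r^kh_0$. If $e\in A$, monotonicity of $A^n$ in $n$ extends this to all $n$, and if not, the bound $|A^n|\le|A^{n'}x|$ for a suitable fixed $x$ does the same.

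Next we pass from positive powers to symmetric balls. A polynomial bound on $|A^n|$ alone does not control $|S^n|$ in general, since in free groups positive words can grow subexponentially relative to balls only in trivial situations. So instead I would use that $A^{rh}\subseteq X_hA^h$ gives a small tripling-type bound. The idea is to pick a scale $h=r^k h_0$ at which $|A^{2h}|\le \ell|A^h|$, using $r\ge2$ and the fact that $A^{2h}\subseteq A^{rh}$ up to a translate. Then invoke the nonsymmetric-to-symmetric passage: a set $B=A^h$ with $|B^2|\le K|B|$ satisfies $|B B^{-1}B|\le K^{O(1)}|B|$ only under tripling, so I would instead use $|B^3|\le \ell'|B|$. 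This is obtained from $A^{3h}\subseteq A^{r^2h}$ (after translating) and two steps of the covering, and Ruzsa calculus (Tao's results in \cite{Tao2008}) then gives $|(B\cup B^{-1}\cup\{e\})^3|\le K^{O(1)}|B|$. Hence $U:=B\cup B^{-1}\cup\{e\}$ is a $K^{O(1)}$-approximate group generating $\Gamma$, and it has small doubling at arbitrarily large scales.

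Finally, apply the Breuillard--Green--Tao structure theorem \cite{BGT2012}, or Breuillard--Tointon as stated before Proposition~\ref{prop:BT-covering}. Taking the symmetric generating set $S$, we need $|S^{2n+1}|\le K|S^n|$ for some large $n$. Since $A^h\subseteq S^h$ and $U^m$ contains $S^{m}$ only if $e\in A$ (which we arranged), doubling of $U^m$ for $m$ large transfers to doubling of $S^{hm}$ along the sequence. Breuillard--Tointon then gives polynomial growth of $S$, and Gromov's theorem yields that $\Gamma$ is virtually nilpotent. The main obstacle I expect is precisely this transfer from asymmetric positive powers to symmetric balls. If the Ruzsa route proves awkward, the first alternative is to cite directly \cite[Section 5]{BiswasMoens2022}, where this converse is established.
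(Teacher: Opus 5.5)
Your first step is sound, and it is the first half of the argument the paper attributes to Biswas--Moens. From $|A^{rh}|\le \ell|A^h|$, iteration along $h_0r^k$, and the monotonicity $|A^n|\le|A^{n+1}|$ (valid even if $e\notin A$, since $A^na\subseteq A^{n+1}$), you get $|A^n|=O(n^{\log_r\ell})$. Similarly $|A^{3h}|\le|A^{r^2h}|\le\ell^2|A^h|$, so Tao's calculus makes $U_h:=A^h\cup A^{-h}\cup\{e\}$ a symmetric set with tripling $K$ uniform in $h$.

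\textbf{The gap is your last paragraph.} With $e\in A$ you only know $S^m\subseteq U_h^m\subseteq S^{hm}$, and these inclusions give no bound on $|S^{2n+1}|/|S^n|$ for any $n$. For that you would need something like $S^h\subseteq U_h^{O(1)}$, i.e.\ that alternating words $a_1b_1^{-1}a_2b_2^{-1}\cdots$ of length $h$ are products of boundedly many elements of $A^{\pm h}$. Nothing in the hypothesis gives this. It is precisely the inner-symmetric-ball property that the paper must assume in Proposition~\ref{prop:inner-ball-criterion}, and can verify only under symmetry or semigroup generation.

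Two smaller points. Your remark that a polynomial bound on $|A^n|$ ``does not control'' $|S^n|$ is misleading: it does, but only a posteriori via an inverse theorem. Also, ``reduce to $e\in A$'' is not free, since the covering hypothesis does not obviously pass from $A$ to $A\cup\{e\}$; Lemma~\ref{lem:add-identity-padding} goes the other way. Do the reduction at the level of growth, using $|(A\cup\{e\})^n|\le (n+1)|A^n|$. Then pigeonholing along $h=3^k$ gives tripling of $(A\cup\{e\})^h$, with a uniform constant, at arbitrarily large $h$.

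The paper (via Biswas--Moens) avoids symmetric balls altogether. It applies an inverse theorem for polynomial growth that is valid for non-symmetric finite sets directly to $A$, so your fallback citation is exactly the paper's proof.

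If you want to keep your own route, drop Breuillard--Tointon and Gromov and finish with a coset count. Replace $A$ by $A\cup\{e\}$ and take $h$ among the good scales.
\begin{itemize}
\item Apply the Breuillard--Green--Tao structure theorem to the approximate group $U_h^2$. This gives a coset nilprogression $P$ with $U_h\subseteq XP$, $|X|\le C(K)$, and $\langle P\rangle$ finitely generated and finite-by-nilpotent, hence virtually nilpotent. Choose $h>C(K)$.
\item Since $e\in A$, you have $A^j\subseteq A^h\subseteq U_h$ for $0\le j\le h$. So $Y_j:=A^j\langle P\rangle$ is an increasing chain of unions of at most $C(K)$ left cosets of $\langle P\rangle$, and it stabilizes at some $j<h$.
\item Then $aY_j\subseteq Y_j$ for every $a\in A$, hence $aY_j=Y_j$, because left multiplication by $a$ permutes left cosets injectively.
\item Thus $Y_j$ is invariant under $\Gamma=\langle A\rangle$, so $Y_j=\Gamma$ and $[\Gamma:\langle P\rangle]\le C(K)$.
\end{itemize}
This is where the positive powers are genuinely used: $U_h$ contains the whole chain $A^0,\dots,A^h$, whereas it need not contain any large symmetric ball.
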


\begin{proof}
We use the result proved in \cite[Section 5]{BiswasMoens2022}.
if \(A\) is finite and asymptotic \((r,\ell)\)-approximate for some \(r\ge2\),
then the subgroup \(\langle A\rangle\) is virtually nilpotent.  More precisely,
Biswas--Moens first show that the one-sided growth function
\(f(n)=|A^n|\) is polynomially bounded, and then apply the Tao/Breuillard--Green--Tao
inverse theorem for polynomial growth to the finite set \(A\).
\end{proof}

\begin{corollary}[Characterization using finite symmetric generating sets]\label{cor:characterization-symmetric}
For a finitely generated group $G$, the following are equivalent.
\begin{enumerate}[label=\textup{(\roman*)}]
\item $G$ is virtually nilpotent.
\item For every finite non-empty symmetric set $A\subseteq G$, the set $A$ is an asymptotic approximate group.
\item Some finite symmetric generating set $S\subseteq G$ with $e\in S$ is an asymptotic $(r,\ell)$-approximate group for some $r\ge 2$ and some $\ell\in\mathbb N$.
\end{enumerate}
\end{corollary}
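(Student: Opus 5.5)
The plan is to prove the cycle of implications (i)\(\Rightarrow\)(ii)\(\Rightarrow\)(iii)\(\Rightarrow\)(i). Each step reduces to a result already stated, so the work is mostly in checking that hypotheses match.

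For (i)\(\Rightarrow\)(ii), let \(G\) be virtually nilpotent and let \(A\subseteq G\) be finite, non-empty and symmetric. Theorem~\ref{thm:symmetric-main} says directly that \(A\) is an asymptotic approximate group; no generation hypothesis on \(A\) is needed there, since that theorem works inside \(\langle A\rangle\).

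For (ii)\(\Rightarrow\)(iii), use that \(G\) is finitely generated. Pick a finite generating set \(T\) and put
\[
S:=T\cup T^{-1}\cup\{e\}.
\]
This \(S\) is finite, symmetric, contains \(e\), and generates \(G\). By (ii), \(S\) is an asymptotic approximate group. By the definition, this means that for every \(r\), in particular \(r=2\), there is some \(\ell=l(S,2)\) such that \(S\) is an asymptotic \((2,\ell)\)-approximate group. That is exactly statement (iii).

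For (iii)\(\Rightarrow\)(i), apply Proposition~\ref{prop:BM-converse} to the finite non-empty set \(S\). It shows that \(\langle S\rangle\) is virtually nilpotent, and \(\langle S\rangle=G\) because \(S\) generates \(G\).

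There is no genuine obstacle, since all the depth sits in the Biswas--Moens converse (via Gromov/Breuillard--Green--Tao), which we may invoke. The only points to check with care are the following.
\begin{enumerate}[label=\textup{(\alph*)}]
\item In (ii)\(\Rightarrow\)(iii), the set \(S\) must actually exist, which uses that \(G\) is finitely generated.
\item The quantifier ``for some \(r\ge2\)'' in (iii) must be compatible with Proposition~\ref{prop:BM-converse}. It is, because that proposition assumes only a single \(r\ge2\).
\end{enumerate}
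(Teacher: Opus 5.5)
Your proposal is correct and follows the paper's proof. You use the same cycle (i)\(\Rightarrow\)(ii)\(\Rightarrow\)(iii)\(\Rightarrow\)(i), citing Theorem~\ref{thm:symmetric-main} and Proposition~\ref{prop:BM-converse}, and you make explicit the choice \(S=T\cup T^{-1}\cup\{e\}\) and \(r=2\) that the paper dismisses as ``immediate because \(G\) is finitely generated.''
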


\begin{proof}
$(i)\Rightarrow(ii)$ is Theorem~\ref{thm:symmetric-main}.  The implication $(ii)\Rightarrow(iii)$ is immediate because $G$ is finitely generated.  The implication $(iii)\Rightarrow(i)$ is Proposition~\ref{prop:BM-converse}.
\end{proof}

\section{Asymmetric case}

We now prove Theorem~\ref{thm:asymmetric-main}.  

\subsection{Bounded positive length of inverses}

Let $A\subseteq G$ be finite with $e\in A$, and suppose $A$ semigroup-generates $\Gamma:=\langle A\rangle$.  Define the positive $A$-length
\[
\ell_A^+(g):=\min\{n\ge 1:g\in A^n\},
\quad g\in\Gamma.
\]
We count only non-empty positive words.  Thus even $e$ has positive $A$-length,
provided $e\in A^n$ for some $n\ge1$.
This is finite for every $g\in\Gamma$ by semigroup generation.  It is generally asymmetric: $\ell_A^+(g)$ and $\ell_A^+(g^{-1})$ need not be equal.

\begin{lemma}[Uniform positive words for inverses]\label{lem:inverse-bound}
Let $A\subseteq G$ be finite with $e\in A$, and assume that $A$ semigroup-generates $\Gamma:=\langle A\rangle$.  Set
\[
S:=A\cup A^{-1}.
\]
Then there is an integer $q\ge 1$ such that
\[
S\subseteq A^q.
\]
Consequently,
\[
S^n\subseteq A^{qn}
\quad\text{for every }n\ge 0.
\]
\end{lemma}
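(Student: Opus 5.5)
The idea is to use the fact that $e\in A$ makes the powers $A^n$ increasing in $n$. Since $A\subseteq A^n$ for all $n\ge1$, it suffices to bound the positive lengths of the finitely many inverses $a^{-1}$, $a\in A$.

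First, for each $s\in S=A\cup A^{-1}$ we have $s\in\Gamma$. By semigroup generation, the positive length $\ell_A^+(s)$ is a finite integer $\ge1$, so $s\in A^{\ell_A^+(s)}$. Since $S$ is finite, we may set
\[
q:=\max_{s\in S}\ell_A^+(s)\ge 1.
\]
Because $e\in A$, we have $A^m=A^m\{e\}\subseteq A^mA=A^{m+1}$ for every $m\ge1$. Hence $A^m\subseteq A^q$ whenever $1\le m\le q$, and every $s\in S$ lies in $A^q$. This gives $S\subseteq A^q$.

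For the consequence, we induct on $n$. The case $n=0$ holds because $S^0=\{e\}=A^0$. If $S^n\subseteq A^{qn}$, then
\[
S^{n+1}=S^nS\subseteq A^{qn}A^q=A^{q(n+1)}.
\]
(Alternatively, for $n\ge1$ one can simply raise $S\subseteq A^q$ to the $n$-th power.)

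None of these steps is a real obstacle. The one point that needs care is the monotonicity $A^m\subseteq A^{m+1}$: it relies on the hypothesis $e\in A$, and without it we could not pass from the individual lengths $\ell_A^+(s)$ to the common exponent $q$.
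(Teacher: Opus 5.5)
Your proof is correct and takes essentially the same route as the paper. The paper sets $q:=\max\bigl(1,\max_{a\in A}\ell_A^+(a^{-1})\bigr)$, which equals your $\max_{s\in S}\ell_A^+(s)$; it then uses $e\in A$ to pad shorter words to length exactly $q$, and raises $S\subseteq A^q$ to the $n$-th power.
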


\begin{proof}
Since $A$ semigroup-generates $\Gamma$, each $a^{-1}$ with $a\in A$ lies in $A^m$ for some $m\ge 1$.  Since $A$ is finite, the number
\[
q:=\max\bigl(1,\max_{a\in A}\ell_A^+(a^{-1})\bigr)
\]
is finite.  If $s\in A^{-1}$, then $s\in A^q$ by definition of $q$ and the fact that $e\in A$, which allows padding shorter words to length exactly $q$.  If $s\in A$, then $s\in A\subseteq A^q$ because $e\in A$ and $q\ge 1$.  Thus $S\subseteq A^q$.  Multiplying $n$ times gives $S^n\subseteq A^{qn}$.
\end{proof}

\begin{lemma}[Comparison of asymmetric powers with symmetric balls]\label{lem:power-comparison}
Under the hypotheses of Lemma~\ref{lem:inverse-bound}, for every $h\ge 0$ one has
\[
A^h\subseteq S^h
\]
and, for every $m\ge 0$,
\[
S^m\subseteq A^{qm}.
\]
In particular,
\[
S^{\lfloor h/q\rfloor}\subseteq A^h
\quad\text{for every }h\ge 0.
\]
\end{lemma}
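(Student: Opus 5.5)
The three inclusions in Lemma~\ref{lem:power-comparison} follow directly from the definitions and from Lemma~\ref{lem:inverse-bound}. I will prove them one at a time, in the order stated.

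\emph{First inclusion.} Since \(A\subseteq A\cup A^{-1}=S\), every product of exactly \(h\) letters from \(A\) is also a product of exactly \(h\) letters from \(S\). Hence \(A^h\subseteq S^h\) for every \(h\ge1\). For \(h=0\), both sides equal \(\{e\}\) by the convention \(E^0=\{e\}\).

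\emph{Second inclusion.} The inclusion \(S^m\subseteq A^{qm}\) is exactly the second assertion of Lemma~\ref{lem:inverse-bound}. Its proof is a one-line induction on \(m\). The case \(m=0\) is trivial. For the inductive step, use \(S\subseteq A^q\) to get
\[
S^{m+1}=S^mS\subseteq A^{qm}A^q=A^{q(m+1)}.
\]

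\emph{Third inclusion.} Put \(m:=\lfloor h/q\rfloor\), so that \(qm\le h\). The second inclusion gives \(S^m\subseteq A^{qm}\). It remains to pass from the exponent \(qm\) to the exponent \(h\). This is the only point that needs care, and it is where the hypothesis \(e\in A\) is used. Since \(e\in A\), we have \(A^k=A^k\{e\}\subseteq A^kA=A^{k+1}\) for every \(k\ge0\); for \(k=0\) this reads \(\{e\}\subseteq A\). So the powers \(A^k\) are increasing in \(k\), and iterating gives \(A^{qm}\subseteq A^h\). Combining,
\[
S^{\lfloor h/q\rfloor}\subseteq A^{q\lfloor h/q\rfloor}\subseteq A^h.
\]
This holds for every \(h\ge0\); when \(h<q\), the left-hand side is \(S^0=\{e\}\), and \(e\in A^h\) by the monotonicity above.

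I do not expect a genuine obstacle here. The only care needed is with the exponent-\(0\) conventions and with noting that monotonicity of the powers \(A^k\) relies on \(e\in A\).
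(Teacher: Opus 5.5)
Your proposal is correct and follows essentially the same route as the paper: $A\subseteq S$ gives $A^h\subseteq S^h$, the second inclusion is quoted from Lemma~\ref{lem:inverse-bound}, and the third combines $q\lfloor h/q\rfloor\le h$ with the monotonicity of the powers $A^k$, which holds because $e\in A$. Your extra attention to the exponent-$0$ conventions, namely $E^0=\{e\}$ and the case $h<q$, is a harmless refinement of the paper's argument.
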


\begin{proof}
The inclusion $A^h\subseteq S^h$ is immediate from $A\subseteq S$.  The inclusion $S^m\subseteq A^{qm}$ is Lemma~\ref{lem:inverse-bound}.  Finally,
\[
q\lfloor h/q\rfloor\le h,
\]
and since $e\in A$, the powers $A^n$ are increasing in $n$.  Therefore
\[
S^{\lfloor h/q\rfloor}\subseteq A^{q\lfloor h/q\rfloor}\subseteq A^h.
\]
\end{proof}

\subsection{Proof of the asymmetric theorem}

\begin{proof}
First assume that \(e\in A\).  Put
\[
S:=A\cup A^{-1}.
\]
Then \(S\) is finite, symmetric, contains \(e\), and generates
\(\Gamma:=\langle A\rangle\).  By Lemma~\ref{lem:power-comparison}, there exists
\(q\ge1\) such that
\[
B_S(\lfloor h/q\rfloor)=S^{\lfloor h/q\rfloor}\subseteq A^h
\]
for every \(h\ge0\).  Proposition~\ref{prop:inner-ball-criterion} therefore
applies with \(\theta=1/q\) and \(h_1=1\), showing that \(A\) is an asymptotic
approximate group.

Now drop the assumption \(e\in A\).  Since \(A\) semigroup-generates \(\Gamma\),
we have \(e\in A^p\) for some \(p\ge1\).  Put
\[
A_e:=A\cup\{e\}.
\]
Then \(A_e\) is finite, contains \(e\), and semigroup-generates \(\Gamma\).  By the
case already proved, \(A_e\) is an asymptotic approximate group.  Since \(e\in A^p\),
Lemma~\ref{lem:add-identity-padding} implies that \(A\) is an asymptotic approximate
group.
\end{proof}

\begin{corollary}[Characterization using semigroup-generating sets]
\label{cor:characterization-semigroup}
For a finitely generated group \(G\), the following are equivalent.
\begin{enumerate}[label=\textup{(\roman*)}]
\item \(G\) is virtually nilpotent.
\item Every finite non-empty set \(A\subseteq G\) that semigroup-generates
\(\langle A\rangle\) is an asymptotic approximate group.
\item There exists a finite non-empty set \(A\subseteq G\) that semigroup-generates
\(G\) and is asymptotic \((r,\ell)\)-approximate for some \(r\ge2\) and some
\(\ell\in\mathbb N\).
\end{enumerate}
\end{corollary}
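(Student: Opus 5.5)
The plan is to prove the cycle \((i)\Rightarrow(ii)\Rightarrow(iii)\Rightarrow(i)\), following the pattern of Corollary~\ref{cor:characterization-symmetric}.

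\emph{Step \((i)\Rightarrow(ii)\).} Let \(A\subseteq G\) be finite, non-empty, and semigroup-generating for \(\langle A\rangle\). Since \(G\) is virtually nilpotent, Theorem~\ref{thm:asymmetric-main} applies verbatim: its hypotheses are only that the ambient group is virtually nilpotent and that \(A\) semigroup-generates \(\langle A\rangle\). It shows that \(A\) is an asymptotic approximate group.

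\emph{Step \((ii)\Rightarrow(iii)\).} We need a finite set that semigroup-generates all of \(G\). Since \(G\) is finitely generated, choose a finite generating set \(T\) and put \(A:=T\cup T^{-1}\cup\{e\}\).
\begin{itemize}
\item Every element of \(G\) is a positive word in \(T\cup T^{-1}\), so \(G=\bigcup_{h\ge1}A^h\).
\item The identity satisfies \(e\in A\), so it is also represented by a non-empty positive word.
\end{itemize}
Hence \(A\) semigroup-generates \(\langle A\rangle=G\). By \((ii)\), \(A\) is an asymptotic approximate group. In particular, for \(r=2\) there is \(\ell=l(A,2)\) such that \(A\) is an asymptotic \((2,\ell)\)-approximate group, which is \((iii)\).

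\emph{Step \((iii)\Rightarrow(i)\).} Let \(A\) be as in \((iii)\). It is finite, non-empty, and asymptotic \((r,\ell)\)-approximate with \(r\ge2\). Proposition~\ref{prop:BM-converse} then shows that \(\langle A\rangle\) is virtually nilpotent. Semigroup generation of \(G\) gives \(G=\bigcup_h A^h\subseteq\langle A\rangle\subseteq G\), so \(\langle A\rangle=G\). Therefore \(G\) is virtually nilpotent.

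No step here is a real obstacle, since everything reduces to earlier results. The only point needing care is in \((ii)\Rightarrow(iii)\): the chosen set must semigroup-generate \(G\) itself and not merely generate it as a group. That is why \(A\) includes the inverses and \(e\).
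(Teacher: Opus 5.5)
Your proof is correct and follows essentially the same route as the paper: Theorem~\ref{thm:asymmetric-main} for \((i)\Rightarrow(ii)\), a finite symmetric generating set of \(G\) (which automatically semigroup-generates \(G\)) for \((ii)\Rightarrow(iii)\), and Proposition~\ref{prop:BM-converse} for \((iii)\Rightarrow(i)\). Adjoining \(e\) and checking that semigroup generation forces \(\langle A\rangle=G\) are harmless extra care, since for a non-empty symmetric \(S\) one already has \(e=ss^{-1}\in S^2\).
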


\begin{proof}
The implication \((i)\Rightarrow(ii)\) is Theorem~\ref{thm:asymmetric-main}, applied
inside the virtually nilpotent subgroup \(\langle A\rangle\).

The implication \((ii)\Rightarrow(iii)\) follows by choosing any finite symmetric
generating set \(S\) for \(G\).  Such an \(S\) semigroup-generates \(G\).

Finally, \((iii)\Rightarrow(i)\) is Proposition~\ref{prop:BM-converse}.
\end{proof}

\begin{remark}
The only use of semigroup generation is in Lemma~\ref{lem:inverse-bound}, applied after adjoining the identity.  It ensures that each inverse of an element of $B:=A\cup\{e\}$ can be written as a positive word of uniformly bounded length in $B$.  This gives a linear-scale inclusion
\[
S^{\lfloor h/q\rfloor}\subseteq B^h,
\quad S=B\cup B^{-1},
\]
which allows us to replace the asymmetric set $A^h$ by an inner symmetric word ball after a bounded padding step.  Without such an inner symmetric ball, the packing argument for word balls does not directly apply.
\end{remark}

\begin{example}[A semigroup-generating asymmetric set]
Let $\Gamma=\mathbb Z$ and let
\[
A=\{-1,0,1,2\}.
\]
Then \(A\) is not symmetric, since \(2\in A\) but \(-2\notin A\).
Nevertheless $A$ semigroup-generates $\mathbb Z$, because $1,-1\in A$.  Theorem~\ref{thm:asymmetric-main} applies.

A more genuinely non-symmetric finite example occurs in a finite cyclic group $C_m=\langle a\rangle$ with
\[
A=\{e,a\}.
\]
Then $A$ semigroup-generates $C_m$ because $a^{-1}=a^{m-1}\in A^{m-1}$, and Theorem~\ref{thm:asymmetric-main} applies.  In fact, in any finite group every non-empty set generating the group as a group also semigroup-generates it.
\end{example}

\section{Infinite sets in virtually nilpotent groups}

In this section we prove a restricted nonabelian analogue of the abelian
semilinear-set result of Biswas--Moens \cite{BiswasMoens2022}.  In abelian
groups, an unbounded linear set has the form
\[
a+\mathbb N_0 b_1+\cdots+\mathbb N_0 b_d,
\]
and finite unions of such sets are semilinear.  In a nonabelian group, there is
no canonical order-independent expression
\[
a u_1^{n_1}\cdots u_d^{n_d}
\]
unless additional commutation or normalization hypotheses are imposed.

We therefore consider sets of the form
\[
A=FM,
\]
where \(M\) is a subsemigroup containing \(e\), \(F\) is finite, and every element
of \(F\) normalizes \(M\).  The normalization condition \(F\subseteq N_G(M)\)
ensures that powers of \(FM\) have the simple form
\[
(FM)^h=F^hM.
\]

\begin{definition}[Normalizer of a subsemigroup]
Let \(M\subseteq G\) be a subsemigroup containing \(e\).  Define
\[
N_G(M):=\{g\in G:gMg^{-1}=M\}.
\]
Equivalently, \(g\in N_G(M)\) if and only if \(gM=Mg\).
It is straightforward to check that \(N_G(M)\) is a subgroup of \(G\).
\end{definition}

\begin{remark}
If \(M\) is generated as a semigroup by finitely many elements
\[
u_1,\dots,u_d\in G,
\]
then
\[
M=\langle u_1,\dots,u_d\rangle_+
:=
\{e\}\cup\{u_{i_1}\cdots u_{i_n}:n\ge1,\ i_j\in\{1,\dots,d\}\}.
\]
The results below do not require \(M\) to be finitely generated.
\end{remark}

\begin{lemma}[Powers of \(FM\)]
\label{lem:powers-FM}
Let \(M\subseteq G\) be a subsemigroup containing \(e\), and let
\(F\subseteq N_G(M)\).  Then
\[
(FM)^h=F^hM
\]
for every \(h\ge1\).
\end{lemma}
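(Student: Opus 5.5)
The plan is to argue by induction on \(h\), using two basic facts about \(M\). First, \(MM=M\). Indeed \(MM\subseteq M\) because \(M\) is a subsemigroup, and \(M=eM\subseteq MM\) because \(e\in M\). Second, \(fM=Mf\) for every \(f\in F\), which holds because \(F\subseteq N_G(M)\). From the second fact we get, for every \(k\ge1\) and every word \(f_1\cdots f_k\in F^k\),
\[
Mf_1\cdots f_k=f_1\cdots f_kM.
\]
To see this, move \(M\) past one letter at a time: \(Mf_1=f_1M\), then \(f_1Mf_2=f_1f_2M\), and so on. Taking the union over all words gives \(MF^k=F^kM\).

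The base case \(h=1\) is trivial. For the inductive step, assume \((FM)^h=F^hM\). Then
\[
(FM)^{h+1}=(FM)^h(FM)=F^hMFM.
\]
We rewrite this one element at a time, which keeps the set identities exact. A typical element is \(xmfm'\) with \(x\in F^h\), \(m,m'\in M\) and \(f\in F\). Since \(Mf=fM\), the product \(mf\) equals \(fm''\) for some \(m''\in M\). Therefore \(xmfm'=xf(m''m')\in F^{h+1}MM=F^{h+1}M\).

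For the reverse inclusion, take \(xfm\) with \(x\in F^h\), \(f\in F\) and \(m\in M\). We can write it as \(x\cdot e\cdot f\cdot m\), which lies in \(F^hMFM\) because \(e\in M\). This gives \((FM)^{h+1}=F^{h+1}M\).

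The only point needing care is that \(gM=Mg\) is an equality of sets and not elementwise commutation. So every rewriting step has to produce an element of \(M\) on the other side rather than the same element. Working with set products \(MF=FM\) and \(MM=M\) handles this directly. I do not expect any real obstacle; the lemma is a routine consequence of normalization together with \(e\in M\).
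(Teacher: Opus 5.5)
Your proof is correct and follows essentially the same approach as the paper: induction on \(h\), using \(MF\subseteq FM\) (from \(fM=Mf\)) and \(MM=M\) for the forward inclusion, and \(e\in M\) for the reverse inclusion. The identity \(MF^k=F^kM\) that you derive is never used in your induction, so you can drop it.
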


\begin{proof}
Since \(F\subseteq N_G(M)\), we have \(Mf=fM\) for every \(f\in F\).  Hence
\[
FMFM=F(MF)M=F(FM)M=F^2M.
\]
More explicitly, the inclusion \((FM)^2\subseteq F^2M\) follows from
\(MF\subseteq FM\), and the reverse inclusion follows because \(e\in M\).
The general identity \((FM)^h=F^hM\) follows by induction.
\end{proof}

\begin{proposition}[Passing from \(F\) to \(FM\)]
\label{prop:semigroup-lift}
Let \(M\subseteq G\) be a subsemigroup containing \(e\), and let
\(F\subseteq N_G(M)\) be non-empty.  If \(F\) is an asymptotic
\((r,l)\)-approximate group, then \(A:=FM\) is also an asymptotic
\((r,l)\)-approximate group.
\end{proposition}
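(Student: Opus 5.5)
The plan is to combine Lemma~\ref{lem:powers-FM} with the covering hypothesis for \(F\), multiplying on the right by \(M\). No growth estimates are needed; the argument is purely algebraic.

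Fix \(r\) and \(l\) as in the statement. Since \(F\) is an asymptotic \((r,l)\)-approximate group, there is \(h_0\in\mathbb N\) such that for every \(h\ge h_0\) there is a set \(X_h\subseteq G\) with \(|X_h|\le l\) and
\[
F^{rh}\subseteq X_hF^h .
\]
These same \(h_0\) and \(X_h\) will serve for \(A\).

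Let \(h\ge h_0\). Since \(rh\ge1\) and \(h\ge1\), Lemma~\ref{lem:powers-FM} applies at both exponents and gives
\[
A^{rh}=(FM)^{rh}=F^{rh}M
\quad\text{and}\quad
A^h=(FM)^h=F^hM .
\]
Right-multiplying the covering inclusion by \(M\) preserves it, because \(E\subseteq E'\) implies \(EM\subseteq E'M\). Using associativity of set products, we get
\[
A^{rh}=F^{rh}M\subseteq (X_hF^h)M=X_h(F^hM)=X_hA^h .
\]
Hence \(A\) is an asymptotic \((r,l)\)-approximate group with the same \(l\) and \(h_0\).

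There is no substantial obstacle. The only point to check is that the identity \((FM)^h=F^hM\) is used for exponents \(h\ge1\), which is exactly the range covered by Lemma~\ref{lem:powers-FM}. Note also that the sets \(X_h\) are the ones supplied for \(F\); since the statement places no restriction on where \(X_h\) lies, nothing further is required.
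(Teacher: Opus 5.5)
Your proof is correct and is the same argument the paper gives. Like the paper, you apply Lemma~\ref{lem:powers-FM} at exponents \(rh\) and \(h\), right-multiply the covering \(F^{rh}\subseteq X_hF^h\) by \(M\), and keep the same \(X_h\) and \(h_0\).
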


\begin{proof}
For all sufficiently large \(h\), choose \(X_h\subseteq G\) with \(|X_h|\le l\)
and
\[
F^{rh}\subseteq X_hF^h.
\]
By Lemma~\ref{lem:powers-FM},
\[
A^{rh}
=(FM)^{rh}
=F^{rh}M
\subseteq X_hF^hM
=X_h(FM)^h
=X_hA^h.
\]
Thus \(A\) is asymptotic \((r,l)\)-approximate.
\end{proof}

\begin{proof}[Proof of theorem \ref{thm:FM-normalized}]
If \(F\) is symmetric, then \(F\) is an asymptotic approximate group by
Theorem~\ref{thm:symmetric-main}, applied inside the virtually nilpotent subgroup
\(\langle F\rangle\).  If \(F\) semigroup-generates \(\langle F\rangle\), then
\(F\) is an asymptotic approximate group by Theorem~\ref{thm:asymmetric-main}.
In both cases Proposition~\ref{prop:semigroup-lift} applies.
\end{proof}

\begin{remark}
The restriction on \(F\) in Theorem~\ref{thm:FM-normalized} is essential for
the present proof.  Proposition~\ref{prop:semigroup-lift} reduces the asymptotic
approximate-group property of \(FM\) to that of the finite set \(F\).  Without
assuming that \(F\) is symmetric or that \(F\)
semigroup-generates \(\langle F\rangle\), this would require the arbitrary finite
asymmetric case in virtually nilpotent groups.
\end{remark}

\begin{corollary}[Finite unions of cosets of a normal subgroup]
\label{cor:normal-subgroup-cosets}
Let \(G\) be virtually nilpotent, let \(N\lhd G\), and let \(F\subseteq G\) be
finite and non-empty.  Set
\[
A:=FN=\bigcup_{f\in F}fN.
\]
If \(F\) is symmetric, or if \(F\) semigroup-generates \(\langle F\rangle\), then
\(A\) is an asymptotic approximate group.
\end{corollary}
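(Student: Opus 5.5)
The plan is to deduce Corollary~\ref{cor:normal-subgroup-cosets} from Theorem~\ref{thm:FM-normalized}, taking \(M:=N\).

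First we check the hypotheses on \(M\).

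\begin{itemize}
\item[] A normal subgroup \(N\) is in particular a subsemigroup of \(G\) containing \(e\).
\item[] Since \(N\lhd G\), we have \(gNg^{-1}=N\) for every \(g\in G\). By the definition of the normalizer of a subsemigroup, this gives \(N_G(N)=G\).
\item[] Hence the finite non-empty set \(F\) automatically satisfies \(F\subseteq N_G(N)\).
\end{itemize}

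Next we check the hypotheses on \(F\). The set \(A=FN\) is exactly of the form \(FM\) with \(M=N\). Moreover, \(F\) satisfies alternative (i) or (ii) of Theorem~\ref{thm:FM-normalized} by assumption. So that theorem applies directly and shows that \(A\) is an asymptotic approximate group.

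Finally, the identity \(FN=\bigcup_{f\in F}fN\) is just the definition of the set product. We would also remark that, by Lemma~\ref{lem:powers-FM}, \(A^h=F^hN\). The covering sets \(X_h\) can be taken to be those coming from \(F\), exactly as in Proposition~\ref{prop:semigroup-lift}.

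There is no real obstacle here. The only point needing care is confirming that the paper's definition of \(N_G(M)\), namely \(gMg^{-1}=M\) with equality, is met by every element of \(G\). Normality gives exactly this equality, not merely an inclusion.
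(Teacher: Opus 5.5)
Your proposal is correct and is essentially the paper's own proof: apply Theorem~\ref{thm:FM-normalized} with \(M=N\), using normality to get \(N_G(N)=G\), so that \(F\subseteq N_G(N)\) holds automatically. Your extra remarks, that \(A^h=F^hN\) and that the covering sets come from \(F\) via Proposition~\ref{prop:semigroup-lift}, are accurate but not needed.
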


\begin{proof}
Apply Theorem~\ref{thm:FM-normalized} with \(M=N\).  Since \(N\) is normal,
\(N_G(N)=G\).
\end{proof}

\begin{corollary}[Finitely generated subsemigroups]
\label{cor:finitely-generated-subsemigroup}
Let \(G\) be virtually nilpotent.  Let \(u_1,\dots,u_d\in G\), and let
\[
M:=\langle u_1,\dots,u_d\rangle_+
\]
be the subsemigroup generated by \(u_1,\dots,u_d\).  Let
\(F\subseteq N_G(M)\) be finite and non-empty, and set
\[
A:=FM.
\]
If \(F\) is symmetric, or if \(F\) semigroup-generates \(\langle F\rangle\), then
\(A\) is an asymptotic approximate group.
\end{corollary}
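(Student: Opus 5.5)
This corollary should follow from Theorem~\ref{thm:FM-normalized}, with no new argument required. The plan is to check that the finitely generated subsemigroup \(M:=\langle u_1,\dots,u_d\rangle_+\) meets the hypotheses placed on \(M\) there, and then to apply that theorem directly.

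First, I check that \(M\) is a subsemigroup of \(G\) containing \(e\). By the Remark preceding Lemma~\ref{lem:powers-FM}, \(M\) is defined as \(\{e\}\) together with all non-empty positive words \(u_{i_1}\cdots u_{i_n}\). A product of two positive words is again a positive word, and multiplying by \(e\) changes nothing, so \(M\) is closed under multiplication. It contains \(e\) by construction.

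Next, the remaining hypotheses are exactly those of the corollary. The group \(G\) is virtually nilpotent. The set \(F\) is finite, non-empty, and contained in \(N_G(M)\). Finally, \(F\) is either symmetric or semigroup-generates \(\langle F\rangle\), which corresponds to alternatives (i) and (ii) of Theorem~\ref{thm:FM-normalized}.

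Applying Theorem~\ref{thm:FM-normalized} to the pair \((M,F)\) therefore shows that \(A=FM\) is an asymptotic approximate group. Spelled out, Lemma~\ref{lem:powers-FM} gives \((FM)^h=F^hM\). Theorems~\ref{thm:symmetric-main} and~\ref{thm:asymmetric-main} show that \(F\) is asymptotic \((r,l)\)-approximate for each \(r\). Proposition~\ref{prop:semigroup-lift} then transfers this property to \(FM\) with the same constants.

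There is no real obstacle here. The only point needing care is the convention that \(\langle u_1,\dots,u_d\rangle_+\) includes \(e\). Theorem~\ref{thm:FM-normalized} requires \(e\in M\), since Lemma~\ref{lem:powers-FM} uses it to obtain the inclusion \(F^hM\subseteq (FM)^h\). The paper's definition includes \(e\) explicitly, so the hypothesis holds. Finite generation of \(M\) plays no role in the argument.
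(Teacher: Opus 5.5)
Your proposal is correct and matches the paper's proof, which simply notes that the corollary is the special case of Theorem~\ref{thm:FM-normalized} in which \(M\) is finitely generated as a semigroup. Your extra checks are accurate and consistent with that one-line argument: that \(\langle u_1,\dots,u_d\rangle_+\) is a subsemigroup containing \(e\), and that finite generation plays no role.
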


\begin{proof}
This is the special case of Theorem~\ref{thm:FM-normalized} in which \(M\) is
finitely generated as a semigroup.
\end{proof}

\subsection{Stability under bounded thickening}

The preceding construction is stable under bounded enlargement.

\begin{lemma}[Bounded enlargement]
\label{lem:bounded-enlargement}
Let \(B\subseteq A\subseteq G\) be non-empty subsets.  Suppose that
\[
A\subseteq B^m
\]
for some \(m\in\mathbb N\).  If \(B\) is an asymptotic approximate group, then \(A\)
is an asymptotic approximate group.
\end{lemma}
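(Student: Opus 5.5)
The plan is to sandwich the powers of \(A\) between powers of \(B\) and then use the approximate-group property of \(B\) at a rescaled scale. Since \(B\subseteq A\subseteq B^m\), we have for every \(h\ge1\)
\[
B^h\subseteq A^h\subseteq B^{mh}.
\]
Fix \(r\in\mathbb N\). The natural move is to apply the hypothesis on \(B\) with dilation factor \(rm\) rather than \(r\). Since \(B\) is an asymptotic approximate group, there exist \(l'=l(B,rm)\) and \(h_0\) such that for every \(h\ge h_0\) there is \(X_h\) with \(|X_h|\le l'\) and
\[
B^{rmh}\subseteq X_hB^h.
\]

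Then for \(h\ge h_0\) I chain the inclusions:
\[
A^{rh}\subseteq (B^m)^{rh}=B^{rmh}\subseteq X_hB^h\subseteq X_hA^h.
\]
This shows that \(A\) is an asymptotic \((r,l')\)-approximate group. Since \(r\) was arbitrary, \(A\) is an asymptotic approximate group. The only facts used are \((B^m)^{k}=B^{mk}\), which holds by definition of the product-set powers, and monotonicity of products of sets under inclusion.

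I do not expect a genuine obstacle. The one point needing care is that the dilation factor must be changed from \(r\) to \(rm\). It is exactly the definition of ``asymptotic approximate group'' (for every \(r\)) that permits this, and no identity element or symmetry of \(B\) is needed. The resulting constant is \(l(A,r)\le l(B,rm)\), and the threshold \(h_0\) is inherited from \(B\).
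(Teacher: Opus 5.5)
Your proof is correct and matches the paper's argument exactly. Like the paper, you invoke the hypothesis on \(B\) with dilation factor \(mr\) and chain \(A^{rh}\subseteq (B^m)^{rh}=B^{mrh}\subseteq X_hB^h\subseteq X_hA^h\).
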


\begin{proof}
Fix \(r\in\mathbb N\).  Since \(B\) is an asymptotic approximate group, applied
with the integer \(mr\), there are \(l\) and \(h_0\) such that
\[
B^{mrh}\subseteq X_hB^h
\]
for all \(h\ge h_0\), with \(|X_h|\le l\).  Hence
\[
A^{rh}
\subseteq (B^m)^{rh}
=B^{mrh}
\subseteq X_hB^h
\subseteq X_hA^h.
\]
Thus \(A\) is asymptotic \((r,l)\)-approximate.
\end{proof}

\begin{corollary}[Bounded thickenings of sets of the form \(FM\)]
\label{cor:bounded-thickening-FM}
Let \(G\) be virtually nilpotent, and let \(B=FM\) satisfy the hypotheses of
Theorem~\ref{thm:FM-normalized}.  If \(B\subseteq A\subseteq B^m\) for some
\(m\in\mathbb N\), then \(A\) is an asymptotic approximate group.
\end{corollary}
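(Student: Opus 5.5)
The plan is to derive the corollary as a direct combination of Theorem~\ref{thm:FM-normalized} and Lemma~\ref{lem:bounded-enlargement}, with \(B=FM\) playing the role of the smaller set.

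\emph{First step.} Since \(B=FM\) satisfies the hypotheses of Theorem~\ref{thm:FM-normalized}, that theorem gives directly that \(B\) is an asymptotic approximate group. Concretely:
\begin{itemize}
\item \(M\) is a subsemigroup containing \(e\);
\item \(F\subseteq N_G(M)\) is finite and non-empty;
\item \(F\) is either symmetric or semigroup-generates \(\langle F\rangle\).
\end{itemize}
Note that \(B\) is non-empty, because \(F\neq\emptyset\) and \(e\in M\). Hence \(A\supseteq B\) is non-empty as well.

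\emph{Second step.} The assumed chain \(B\subseteq A\subseteq B^m\) is exactly the hypothesis of Lemma~\ref{lem:bounded-enlargement}. That lemma therefore transfers the asymptotic approximate-group property from \(B\) to \(A\).

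It is worth recalling why the lemma applies here without change. Fix \(r\). Apply the property of \(B\) with dilation \(mr\), obtaining, for all large \(h\), sets \(X_h\) with \(|X_h|\le l\) and
\[
A^{rh}\subseteq B^{mrh}\subseteq X_hB^h\subseteq X_hA^h .
\]
The first inclusion uses \(A\subseteq B^m\), and the last uses \(B\subseteq A\).

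The only point needing care is that Lemma~\ref{lem:bounded-enlargement} requires \(B\) to be an asymptotic approximate group for \emph{every} dilation factor, and in particular for \(mr\), not just for \(r\). This is exactly what Theorem~\ref{thm:FM-normalized} provides, since it asserts the full asymptotic approximate-group property. So no genuine obstacle remains, and the argument consists of these two citations.
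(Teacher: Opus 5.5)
Your proposal is correct and is exactly the paper's proof: Theorem~\ref{thm:FM-normalized} shows that \(B=FM\) is an asymptotic approximate group. Lemma~\ref{lem:bounded-enlargement}, which uses dilation \(mr\) for \(B\) together with \(B\subseteq A\subseteq B^m\), then transfers this property to \(A\).
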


\begin{proof}
By Theorem~\ref{thm:FM-normalized}, \(B\) is an asymptotic approximate
group.  Now apply Lemma~\ref{lem:bounded-enlargement}.
\end{proof}

\subsection{Necessity of a normalization hypothesis}

Some normalization hypothesis is needed for the present type of nonabelian
semilinear statement.  Without such a hypothesis, a naive one-sided nonabelian
analogue of an unbounded linear set can fail to be an asymptotic approximate
group already in the integer Heisenberg group.

\begin{proposition}[Heisenberg counterexample]
\label{prop:heisenberg-normalization-fails}
Let \(H=\mathbb Z^3\) with multiplication
\[
(a,b,c)(a',b',c')=(a+a',b+b',c+c'-a'b).
\]
Put
\[
x=(1,0,0),\qquad y=(0,1,0),
\]
and
\[
A:=\{e\}\cup\{xy^n:n\ge0\}.
\]
Then, for every \(r\ge2\), every \(h\ge1\), and every finite \(X\subseteq H\),
\[
A^{rh}\nsubseteq XA^h.
\]
Consequently \(A\) is not an asymptotic \((r,l)\)-approximate group for any
\(r\ge2\) and \(l\in\mathbb N\).
\end{proposition}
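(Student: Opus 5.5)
The plan is to exhibit, for fixed \(r\ge2\), \(h\ge1\) and a finite \(X\), an explicit family \(g_N\in A^{rh}\) with \(N\to\infty\), and to show that for \(N\) large no translate \(\xi A^h\) with \(\xi\in X\) contains \(g_N\). The main tool is an explicit normal form for elements of \(A^k\).

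\emph{Step 1 (normal form).} First, \(xy^n=(1,n,0)\). By induction on the multiplication rule, a product \((1,n_1,0)(1,n_2,0)\cdots(1,n_k,0)\) equals
\[
\Bigl(k,\ \textstyle\sum_i n_i,\ -\sum_{i=1}^k n_i(k-i)\Bigr),
\]
since the \(j\)-th factor contributes \(-1\cdot(n_1+\dots+n_{j-1})\) to the third coordinate. Since \(e\) may also occur as a factor, every element of \(A^h\) has the form \((k,N',c')\) with \(0\le k\le h\), \(N'\ge0\), and
\[
-(k-1)N'\le c'\le 0
\]
when \(k\ge1\). When \(k=0\) the element is \(e\).

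\emph{Step 2 (the witnesses).} Put \(g_N:=(xy^N)\,x^{rh-1}\in A^{rh}\). By Step 1,
\[
g_N=(rh,\,N,\,-(rh-1)N).
\]
Now fix \(\xi=(\alpha,\beta,\gamma)\in X\) and suppose \(g_N=\xi w\) with \(w=(k,N',c')\in A^h\). Then
\[
\xi w=(\alpha+k,\ \beta+N',\ \gamma+c'-k\beta).
\]
Matching coordinates gives \(k=rh-\alpha\) and \(N'=N-\beta\). The third coordinate then gives
\[
-(rh-1)N=\gamma-k\beta+c'\ \ge\ \gamma-k\beta-(k-1)(N-\beta).
\]

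\emph{Step 3 (contradiction for large \(N\)).} Since \(k\le h<rh\) (this is where \(r\ge2\) is used), the inequality rearranges to
\[
(rh-k)N\le C_\xi
\]
for a constant \(C_\xi\) depending only on \(\xi\) and \(h\). The case \(k=0\) is even easier: then \(w=e\) and \(\xi=g_N\), which happens for at most one \(N\). Since \(X\) is finite, choosing \(N\) larger than all the finitely many resulting thresholds gives \(g_N\notin XA^h\). Hence \(A^{rh}\nsubseteq XA^h\), and the consequence about \((r,l)\)-approximate groups follows immediately.

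The main obstacle is Step 1: getting the sign and index bookkeeping in the third coordinate right, and in particular the extremal bound \(c'\ge-(k-1)N'\). A simple counting argument on fibers would only show that \(|X|\) must be at least about \(r\), not that no finite \(X\) works. The real point is this one-sided extremal bound: the translate can reach a third coordinate only down to slope \(k-1\le h-1\), whereas \(g_N\) sits at slope \(rh-1\).
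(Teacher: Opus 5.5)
Your proposal is correct and follows essentially the same route as the paper. It uses the same normal form for products of the $xy^{n_i}$ with the extremal bound $c'\ge -(k-1)N'$, the same witnesses $g_N=(xy^N)x^{rh-1}=(rh,N,-(rh-1)N)$, the same split into $w=e$ versus $w\ne e$, and the same key inequality, where your unspecified constant is exactly $C_\xi=\beta-\gamma$, giving $(rh-k)N\le\beta-\gamma$ with $rh-k\ge(r-1)h\ge1$.
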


\begin{proof}
We have
\[
xy^n=(1,n,0).
\]
A direct induction gives
\[
(xy^{n_1})\cdots(xy^{n_k})
=
\left(k,\sum_{i=1}^k n_i,-\sum_{i=1}^k(k-i)n_i\right).
\]
Thus every element of \(A^h\) is either \(e\), or else has the form \((k,B,C)\), where
\(1\le k\le h\), \(B\ge0\), and
\[
C\ge -(k-1)B.
\]

Fix \(r\ge2\), \(h\ge1\), and a finite set \(X\subseteq H\).  For \(n\ge0\), define
\[
g_n:=(xy^n)x^{rh-1}=(rh,n,-(rh-1)n).
\]
Since \(x=xy^0\in A\), the element \(x^{rh-1}\) is a product of \(rh-1\)
elements of \(A\).
Then \(g_n\in A^{rh}\).

Suppose, toward a contradiction, that \(g_n\in XA^h\).  Then there are
\[
t=(a,b,c)\in X
\quad\text{and}\quad
u\in A^h
\]
such that
\[
g_n=tu.
\]

First consider the case \(u=e\).  Then \(g_n=t\), so in particular \(n=b\).  For each
fixed \(t\in X\), this can occur for at most one value of \(n\).

It remains to consider \(u\ne e\).  Write
\[
u=(k,B,C),
\]
where \(1\le k\le h\), \(B\ge0\), and \(C\ge -(k-1)B\).  Comparing first and second
coordinates in
\[
(rh,n,-(rh-1)n)=(a,b,c)(k,B,C)
\]
gives
\[
a+k=rh,
\qquad
b+B=n.
\]
Comparing third coordinates gives
\[
-(rh-1)n=c+C-kb.
\]
Using \(C\ge -(k-1)B\) and \(B=n-b\), we get
\[
-(rh-1)n
=
c+C-kb
\ge
c-(k-1)(n-b)-kb
=
-(k-1)n+c-b.
\]
Therefore
\[
(rh-k)n\le b-c.
\]
But \(k\le h\), so
\[
rh-k\ge rh-h=(r-1)h\ge1.
\]
For each fixed \(t\in X\), this inequality fails for all sufficiently large \(n\).

Since \(X\) is finite, we may choose \(n\) large enough that neither the case \(u=e\)
nor the case \(u\ne e\) can occur for any \(t\in X\).  For this \(n\), we have
\[
g_n\in A^{rh}
\quad\text{but}\quad
g_n\notin XA^h.
\]
Thus
\[
A^{rh}\nsubseteq XA^h.
\]
Since \(X\) was arbitrary, \(A\) is not asymptotic \((r,l)\)-approximate for any
\(r\ge2\) and \(l\in\mathbb N\).
\end{proof}

\begin{remark}
In the Heisenberg
example, multiplying the one-sided family \(xy^n\) produces a central coordinate
whose negative slope grows with the number of factors.  A bounded number of left
translates of \(A^h\) cannot cover the larger central part in \(A^{rh}\).  This is
the obstruction that the normalization condition \(F\subseteq N_G(M)\) is designed
to avoid in Theorem~\ref{thm:FM-normalized}.
\end{remark}

\section{Functorial properties}

We note two elementary permanence properties.  They are useful when passing to
quotients by finite normal subgroups or comparing a virtually nilpotent group with a
finite-kernel quotient.

The first statement is the asymptotic analogue of Nathanson's homomorphism
lemma for approximate groups; compare \cite[Lemma~5]{Nathanson2018}.

\begin{proposition}[Homomorphic images]\label{prop:homomorphic-images}
Let \(\pi:G\to Q\) be a group homomorphism, and let \(A\subseteq G\) be non-empty.
If \(A\) is asymptotic \((r,l)\)-approximate in \(G\), then \(\pi(A)\) is asymptotic
\((r,l)\)-approximate in \(Q\).
\end{proposition}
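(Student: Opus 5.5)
The argument is a direct transport of the covering inclusion along \(\pi\), using only that a homomorphism respects product sets and left translates. No structural input about \(G\) or \(Q\) is needed, so none of the growth estimates from Theorem~\ref{thm:bass-guivarch} or Proposition~\ref{prop:covering} enter.

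First I would record the identity \(\pi(E F)=\pi(E)\pi(F)\) for arbitrary subsets \(E,F\subseteq G\). This holds because \(\pi(ef)=\pi(e)\pi(f)\), and every product \(\pi(e)\pi(f)\) in \(\pi(E)\pi(F)\) arises this way. By induction on \(h\) it gives \(\pi(A^h)=\pi(A)^h\) for every \(h\ge1\), and likewise \(\pi(XE)=\pi(X)\pi(E)\) for any \(X\subseteq G\).

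Next, let \(h_0\) be the threshold from the hypothesis. For \(h\ge h_0\), choose \(X_h\subseteq G\) with \(|X_h|\le l\) and \(A^{rh}\subseteq X_hA^h\). Applying \(\pi\), which preserves inclusions of sets, gives
\[
\pi(A)^{rh}=\pi(A^{rh})\subseteq \pi(X_hA^h)=\pi(X_h)\,\pi(A)^h .
\]
Then set \(X_h':=\pi(X_h)\subseteq Q\). Since \(|\pi(X_h)|\le|X_h|\le l\), and \(\pi(A)\) is non-empty because \(A\) is, the same \(h_0\) witnesses that \(\pi(A)\) is asymptotic \((r,l)\)-approximate in \(Q\).

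There is no real obstacle here. The only point needing care is the equality \(\pi(A^h)=\pi(A)^h\); in fact only the inclusion \(\supseteq\) is needed for the left-hand side and \(\subseteq\) for the right-hand side, and both are immediate.
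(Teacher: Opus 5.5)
Your proof is correct and is the same as the paper's: apply \(\pi\) to \(A^{rh}\subseteq X_hA^h\), use \(\pi(A^{k})=\pi(A)^{k}\) and \(\pi(X_hA^h)=\pi(X_h)\pi(A)^h\), and note that \(|\pi(X_h)|\le|X_h|\le l\). Your remark on which inclusion each side of the chain needs is accurate.
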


\begin{proof}
For all sufficiently large \(h\), choose \(X_h\subseteq G\) with \(|X_h|\le l\) and
\[
A^{rh}\subseteq X_hA^h.
\]
Applying \(\pi\), we get
\[
\pi(A)^{rh}
=
\pi(A^{rh})
\subseteq
\pi(X_h)\pi(A^h)
=
\pi(X_h)\pi(A)^h.
\]
Since \(|\pi(X_h)|\le |X_h|\le l\), the result follows.
\end{proof}

\begin{proposition}[Finite-kernel lifting]\label{prop:finite-kernel-lifting}
Let \(\pi:G\to Q\) be a group homomorphism with finite kernel \(K\).  Let
\(A\subseteq G\) be non-empty.  If \(\pi(A)\) is asymptotic \((r,l)\)-approximate in
\(Q\), then \(A\) is asymptotic \((r,|K|l)\)-approximate in \(G\).
\end{proposition}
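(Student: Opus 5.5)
The plan is to pull back the covering of \(\pi(A)^{rh}\) along \(\pi\). We then absorb the ambiguity coming from the kernel by enlarging the translating set by the finite set \(K\). No growth estimates are needed; the whole argument is a formal manipulation of preimages.

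Fix \(h\) large enough that the hypothesis applies. Choose \(Y_h\subseteq Q\) with \(|Y_h|\le l\) and
\[
\pi(A)^{rh}\subseteq Y_h\,\pi(A)^h .
\]
We may discard elements of \(Y_h\) that are not used. Then every \(y\in Y_h\) lies in \(\pi(A)^{rh}\pi(A)^{-h}\subseteq\pi(G)\). We can therefore pick a lift \(x_y\in G\) with \(\pi(x_y)=y\), and set \(X_h':=\{x_y:y\in Y_h\}\), so that \(|X_h'|\le l\).

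Next we verify the covering in \(G\). Let \(g\in A^{rh}\). Since \(\pi(A^{rh})=\pi(A)^{rh}\), there are \(y\in Y_h\) and \(u\in A^h\) with \(\pi(g)=y\,\pi(u)=\pi(x_yu)\). Hence
\[
(x_yu)^{-1}g\in K,
\qquad\text{equivalently}\qquad
g\in x_y\,u\,K .
\]
The remaining issue is that the kernel factor sits on the right of \(u\), while we want left translates of \(A^h\). Since \(K=\ker\pi\) is normal, \(uK=Ku\), and so
\[
g\in x_yKu\subseteq (X_h'K)\,A^h .
\]
Thus \(A^{rh}\subseteq X_hA^h\) with \(X_h:=X_h'K\) and \(|X_h|\le|X_h'|\,|K|\le|K|\,l\). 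This holds for every \(h\) beyond the threshold \(h_0\) given for \(\pi(A)\), so \(A\) is asymptotic \((r,|K|l)\)-approximate.

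The only step requiring care is moving the kernel element from the right of \(u\) to the left. Normality of \(K\) settles it at once. Without normality we would be stuck with right translates by \(K\), which is why the statement is phrased for kernels.
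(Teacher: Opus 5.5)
Your proof is correct and is essentially the paper's argument: restrict $Y_h$ to elements of $\pi(G)$ (the paper intersects with $\pi(G)$, while you discard unused translates, to the same effect), lift each remaining element, and multiply the lifts by $K$ to get at most $|K|l$ left translates. The only cosmetic difference is that the paper uses $\widetilde y^{-1}ga^{-1}\in K$, which gives $g\in\widetilde y\,K\,a$ directly, so the appeal to $uK=Ku$ is not needed, and neither is your closing remark that normality is what rescues the argument.
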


\begin{proof}
For all sufficiently large \(h\), choose \(Y_h\subseteq Q\) with \(|Y_h|\le l\) and
\[
\pi(A)^{rh}\subseteq Y_h\pi(A)^h.
\]
We first replace \(Y_h\) by
\[
Y_h':=Y_h\cap \pi(G).
\]
This does not destroy the covering property.  Indeed, if
\(z\in \pi(A)^{rh}\), then \(z\in \pi(G)\).  If \(z=yb\) with
\(y\in Y_h\) and \(b\in \pi(A)^h\), then \(b\in \pi(G)\), and hence
\[
y=zb^{-1}\in \pi(G).
\]
Thus
\[
\pi(A)^{rh}\subseteq Y_h'\pi(A)^h
\quad\text{and}\quad
|Y_h'|\le |Y_h|\le l.
\]

For each \(y\in Y_h'\), choose one lift \(\widetilde y\in G\) with
\[
\pi(\widetilde y)=y,
\]
and set
\[
X_h:=\{\widetilde y k:y\in Y_h',\ k\in K\}.
\]
Then
\[
|X_h|\le |K|\,|Y_h'|\le |K|l.
\]

Let \(g\in A^{rh}\).  Then
\[
\pi(g)\in \pi(A)^{rh}\subseteq Y_h'\pi(A)^h.
\]
Thus there exist \(y\in Y_h'\) and \(a\in A^h\) such that
\[
\pi(g)=y\pi(a).
\]
Using the chosen lift \(\widetilde y\), this gives
\[
\pi(\widetilde y^{-1}ga^{-1})=e.
\]
Hence
\[
\widetilde y^{-1}ga^{-1}\in K.
\]
Therefore
\[
g\in \widetilde y K A^h\subseteq X_hA^h.
\]
Since \(g\in A^{rh}\) was arbitrary,
\[
A^{rh}\subseteq X_hA^h.
\]
Thus \(A\) is asymptotic \((r,|K|l)\)-approximate.
\end{proof}

\section{Further comments and possible direction}\label{sec:furthercomments}

The symmetric argument exploits the identity \(A^h=B_A(h)\), while the
semigroup-generating asymmetric argument relies on the weaker fact that
\(A^h\) still contains a symmetric word ball whose radius grows linearly with
\(h\).  For a general finite set \(A\subseteq G\) with \(e\in A\) that generates
\(\Gamma=\langle A\rangle\) as a group but not as a semigroup, neither feature
need be present.  In that situation the sets \(A^h\) may resemble a directed
cone, or even a lower-dimensional region, inside the asymptotic cone. One possible way could be a directed limit-shape theorem for the
one-sided product sets \(A^h\), parallel in spirit to the asymptotic-shape
theory for symmetric word balls developed by Pansu \cite{Pansu1983} and by Breuillard \cite{Breuillard2014}. Given a finitely
generated virtually nilpotent group \(\Gamma\) and a finite set
\(A\subseteq\Gamma\) with \(e\in A\), one would like to know whether the rescaled
sets \(A^h\) converge in the asymptotic cone to a compact homogeneous set
\(K_A\), perhaps lower-dimensional and with \(e\) lying on its boundary.  A strengthening would be an interior-point criterion: whenever
\(g_h\in\Gamma\) converges after rescaling to a point in a suitable interior
region of \(K_A\), the elements \(g_h\) should eventually belong to \(A^h\).  If
available, such a statement could be combined with a compactness argument to
cover the limit shape for \(A^{rh}\) by finitely many left translates of a
slightly eroded copy of the limit shape for \(A^h\), and then transfer that
covering back to the discrete group.

\end{document}